\documentclass[reqno]{amsart}
\usepackage{amssymb}

\usepackage{amsthm}
\theoremstyle{plain}
\usepackage{tikz}
\usetikzlibrary{decorations.pathreplacing}
\tikzset{
mybrace/.style={decorate,decoration={brace,aspect=#1}}
}

\usepackage{enumerate,enumitem}
\usepackage{thmtools, thm-restate}
\declaretheorem{theorem}

\newcommand{\Z}{\mathbb{Z}}

\newcommand{\script}{\mathcal}
\newcommand{\parentheses}[1]{{\left( {#1} \right)}}

\newcommand{\p}{\parentheses}

\newcommand{\Set}[1]{{\left\lbrace {#1} \right\rbrace}}
\newcommand{\singleton}{\Set}

\newcommand{\pair}[1]{\langle {#1} \rangle}
\def\set#1:#2{\Set{{#1} \colon {#2}}}

\newcommand{\pseudogrid}[4]{\pair{{#1},{#2}}_{{#3},{#4}}}
\newcommand{\stdsquare}[3]{\blacksquare\p{{#1},{#2},{#3}} }
\newcommand{\doubleray}[2]{{\leftrightsquigarrow}\p{{#1},{#2}} }

\numberwithin{theorem}{section}
\newtheorem{lemma}[theorem]{Lemma}  
\newtheorem{cor}[theorem]{Corollary}

\newtheorem{prob}{Problem}
\newtheorem{conj}{Conjecture}

\newtheorem{claim}{Claim}

\theoremstyle{definition}
\newtheorem{defn}[theorem]{Definition}
\newtheorem{notation}[theorem]{Notation}

\newtheorem{step}{Step}

\newcommand{\N}{\mathbb{N}}

\begin{document}
\author{Joshua Erde}
\thanks{Joshua Erde was supported by the Alexander von Humboldt Foundation.}
\address{University of Hamburg, Department of Mathematics, Bundesstra{\ss}e 55 (Geomatikum), 20146 Hamburg, Germany}
\email{joshua.erde@uni-hamburg.de}
\author{Florian Lehner}
\thanks{Florian Lehner was supported by the Austrian Science Fund (FWF) Grant no. J 3850-N32}
\address{University of Warwick, Mathematics Institute, Zeeman Building, Coventry CV4 7AL, United Kingdom}
\email{mail@florian-lehner.net}
\author{Max Pitz}
\address{University of Hamburg, Department of Mathematics, Bundesstra{\ss}e 55 (Geomatikum), 20146 Hamburg, Germany}
\email{max.pitz@uni-hamburg.de}
\title{Hamilton decompositions of one-ended Cayley graphs}  

\keywords{Hamilton decomposition; Cayley graph; double ray; Alspach conjecture}

\subjclass[2010]{05C45, 05C63, 20K99}  

\begin{abstract}
We prove that any one-ended, locally finite Cayley graph with non-torsion generators admits a decomposition into edge-disjoint Hamiltonian (i.e.\ spanning) double-rays. In particular, the $n$-dimensional grid $\mathbb{Z}^n$ admits a decomposition into $n$ edge-disjoint Hamiltonian double-rays for all $n \in \N$.
\end{abstract}

\maketitle
\section{Introduction}

A \emph{Hamiltonian cycle} of a finite graph is a cycle which includes every vertex of the graph. A finite graph $G=(V,E)$ is said to have a \emph{Hamilton decomposition} if its edge set can be partitioned into disjoint sets $E=E_1 \dot\cup E_2 \dot\cup \cdots \dot\cup E_r$ such that each $E_i$ is a Hamiltonian cycle in $G$. 

The starting point for the theory of Hamilton decompositions is an old result by Walecki from 1890 according to which every finite complete graph of odd order has a Hamilton decomposition (see \cite{A08} for a description of his construction). Since then, this result has been extended in various different ways, and we refer the reader to the survey of Alspach, Bermond and Sotteau \cite{ABS90} for more information. 

Hamiltonicity problems have also been considered for infinite graphs, see for example the survey by Gallian and Witte \cite{WG84}. While it is sometimes not obvious which objects should be considered the correct generalisations of a Hamiltonian cycle in the setting of infinite graphs, for one-ended graphs the undisputed solution is to consider \emph{double-rays}, i.e.\ infinite, connected, 2-regular subgraphs. Thus, for us a \emph{Hamiltonian double-ray} is then a double-ray which includes every vertex of the graph, and we say that an infinite graph $G=(V,E)$ has a \emph{Hamilton decomposition} if we can partition its edge set into edge-disjoint Hamiltonian double-rays. 

In this paper we will consider infinite variants of two long-standing conjectures on the existence of Hamilton decompositions for finite graphs. The first conjecture concerns Cayley graphs: Given a finitely generated abelian group $(\Gamma,+)$ and a finite generating set $S$ of $\Gamma$, the \emph{Cayley graph} $G(\Gamma,S)$ is the multi-graph with vertex set $\Gamma$ and edge multi-set
\[
\{(x,x+g) \,: \,  x \in \Gamma, g \in S \}.
\]

\begin{conj}[Alspach \cite{A85}]\label{c:ab}
If $\Gamma$ is an abelian group and $S$ generates $G$, then the simplification of $G(\Gamma,S)$ has a Hamilton decomposition, provided that it is $2k$-regular for some $k$.
\end{conj}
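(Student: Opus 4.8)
Since Conjecture~\ref{c:ab} is a notorious open problem, I cannot realistically offer a complete proof; what follows is the line of attack I would pursue, together with the point at which I expect it to stall. The starting point is the structure theorem for finite abelian groups, which lets me assume $\Gamma \cong \Z_{n_1} \oplus \cdots \oplus \Z_{n_r}$ and normalise the generating set. I would then separate $S$ into three types of generators: those of order $2$ (each contributing a perfect matching rather than a $2$-factor of cycles), the ``axis'' generators aligned with the cyclic summands, and the remaining ``skew'' generators. The overall aim is an induction on the rank $r$ and on $\cardinality{S}$, in which one builds a Hamilton decomposition of $G(\Gamma, S)$ by amalgamating Hamilton structures coming from a subgroup with Hamilton structures on the quotient.

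The base of the induction is the case in which $S$ consists only of axis generators: here the simplification of $G(\Gamma, S)$ is the Cartesian product of cycles $C_{n_1} \,\square\, \cdots \,\square\, C_{n_r}$, and its decomposition into $r$ Hamiltonian cycles is a classical result that I would invoke directly. For two generators the relevant graph is $4$-regular, and I would reuse the explicit Bermond--Favaron--Mah\'{e}o-type constructions, which produce two edge-disjoint Hamiltonian cycles by hand. The inductive step is the genuinely hard part: given a pair of generators $\{g, h\}$, I would fix a Hamilton decomposition of the Cayley graph of the subgroup $\langle g, h \rangle$, and then try to ``lift'' it across the cosets of $\langle g, h\rangle$ in $\Gamma$ using the remaining generators, stitching the lifted $2$-factors into spanning cycles. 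This is closely related to the classical theory of \emph{sequenceable} and \emph{$R$-sequenceable} abelian groups, where a Hamilton decomposition of the complete Cayley graph is equivalent to a suitable cyclic ordering of the group elements.

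The main obstacle is that none of these steps respects the category of Cayley graphs. Once a single Hamiltonian cycle is removed the graph is no longer vertex-transitive, so the naive ``delete a Hamilton cycle and recurse'' strategy collapses, and the amalgamation must instead produce \emph{all} of the cycles simultaneously while honouring the generating multiset. Controlling this global routing forces arithmetic side conditions --- gcd relations among the orders $n_i$, and parity constraints introduced by the order-$2$ generators --- and it is exactly these number-theoretic compatibility conditions that cause the known partial results to require restrictions on $\Gamma$ or on $S$. I do not see how to discharge them in full generality, which is precisely why the conjecture remains open.

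Finally, it is worth contrasting this with the infinite, one-ended setting that the paper actually resolves. Replacing a finite cyclic factor by a free $\Z$-direction turns the rigid finite cycles into double-rays, i.e.\ bi-infinite paths, so the ``wrap-around'' parity and gcd constraints simply evaporate: a non-torsion generator yields a $2$-factor consisting of double-rays rather than cycles of a fixed finite length, and one-endedness supplies a canonical infinite backbone along which the spanning double-rays can be routed coherently. I would therefore expect the authors to exploit exactly this extra freedom, proving the infinite analogue unconditionally even though the finite Conjecture~\ref{c:ab} stays out of reach.
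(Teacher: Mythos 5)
This statement is Alspach's conjecture, which the paper explicitly presents as an open problem and does not prove; the paper only establishes an infinite analogue (Theorem~\ref{t:ZN}) for one-ended Cayley graphs with non-torsion generators. You are therefore right not to claim a proof, and no comparison with ``the paper's own proof'' is possible because none exists. Your sketch of a finite attack (structure theorem, induction on rank and on $\cardinality{S}$, lifting across cosets of $\langle g,h\rangle$) is a reasonable summary of the known partial approaches, and you correctly identify the genuine obstructions: order-$2$ generators destroying $2$-regularity of colour classes, loss of vertex-transitivity after deleting a Hamilton cycle, and arithmetic compatibility conditions. One small caveat: the conjecture as stated already excludes the order-$2$ difficulty by requiring the simplification to be $2k$-regular, so that particular obstacle is definitional rather than substantive. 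Your closing paragraph accurately anticipates what the paper actually does in the infinite setting --- it exploits precisely the freedom you describe, finding a $\Z^2$-subgrid through each non-torsion generator (Corollary~\ref{c:oneend}) and performing local colour switchings on standard squares inside large finite grids to merge monochromatic double-rays, with one-endedness guaranteeing the grids exist. So your assessment is sound; there is simply no proof to check.
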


Note that if $S \cap -S = \emptyset$, then $G(\Gamma,S)$ is automatically a $2|S|$-regular simple graph. If $G(\Gamma,S)$ is finite and $2$-regular, then the conjecture is trivially true. Bermond, Favaron and Maheo \cite{BFM89} showed that the conjecture holds in the case $k=2$. Liu \cite{L94} proved certain cases of the conjecture for finite $6$-regular Cayley graphs, and his result was further extended by Westlund \cite{W12}.

Our main theorem in this paper is the following affirmative result towards the corresponding infinite analogue of Conjecture~\ref{c:ab}:

\begin{theorem}\label{t:ZN}
Let $\Gamma$ be an infinite, finitely generated abelian group, and let $S$ be a generating set such that every element of $S$ has infinite order. If the Cayley graph $G=G(\Gamma,S)$ is one-ended, then it has a Hamilton decomposition.
\end{theorem}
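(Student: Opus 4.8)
The plan is to first put $\Gamma$ into a normal form and then build the double-rays by an explicit, periodic construction on a grid-like skeleton. By the structure theorem for finitely generated abelian groups, $\Gamma \cong \Z^d \oplus T$ with $T$ finite; since $G$ is infinite we have $d \geq 1$, and since it is one-ended (a single infinite complementary component after deleting any finite ball) we must have $d \geq 2$, because $d=1$ yields a two-ended graph and $d=0$ a finite one. Because every $g \in S$ has infinite order, its image in $\Z^d$ is nonzero, so the $g$-edges already partition $V(G)$ into a family of double-rays, namely the cosets of $\langle g \rangle$ traced out by $g$-steps. Writing $k=|S|$, so that after simplification $G$ is $2k$-regular (for $k=|S|$ when $S$ contains no inverse pairs), the goal is to partition $E(G)$ into exactly $k$ pairwise edge-disjoint spanning double-rays.

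Next I would isolate the torsion-free standard case that the abstract singles out: decomposing $\Z^n$ into $n$ Hamiltonian double-rays, treating $n=2$ as the base engine (the infinite analogue of the finite $k=2$ result, but carried out directly). Viewing $\Z^2$ as a bi-infinite stack of horizontal lines, the two double-rays are produced by a single periodic \emph{brick} pattern, a finite gadget repeated across the plane: within each period one double-ray runs mostly horizontally and uses a sparse set of vertical edges to descend to the next line, while the second does the complementary thing, so that together they use every horizontal and every vertical edge exactly once. The real content of the base case is the verification that each of the two resulting $2$-regular, edge-disjoint subgraphs is in fact connected and spanning, i.e.\ a single double-ray rather than a disjoint union of several.

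For the inductive step I would pass from $\Z^{n-1}$ to $\Z^{n} = \Z^{n-1} \times \Z$ by absorbing the new coordinate direction. Here the crucial arithmetic is local: at each vertex exactly two edges point in the new direction, yet these scarce `transition' edges must be shared among $n$ double-rays, each of which must nonetheless meet every one of the infinitely many layers $\Z^{n-1} \times \{z\}$. The idea is to begin from the $n-1$ spanning double-rays in each layer and then re-route them through the layers using a sparse, periodic assignment of transition edges, simultaneously creating one additional double-ray built predominantly from transition edges. A brick-laying pattern refining the one from the base case accomplishes this while keeping the subgraphs edge-disjoint and exhausting $E(G)$.

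Finally I would reduce the general statement to this torsion-free standard case. Choosing a height homomorphism $\phi \colon \Gamma \twoheadrightarrow \Z$ together with two generators whose images are linearly independent gives a two-dimensional sweep direction; the finite torsion $T$ replaces each lattice vertex by a fibre of size $|T|$, and the remaining non-standard, possibly `diagonal' generators thicken the skeleton with further parallel double-ray families. Each such feature is handled by the same mechanism: local gadgets that distribute the extra edges among the double-rays and connect fibres and adjacent layers only sparsely. The main obstacle throughout is global rather than local: local $2$-regularity and edge-disjointness are easy to arrange period-by-period, but guaranteeing that every individual subgraph is connected and spanning, so that the scarce transition edges never fragment a double-ray into pieces and no vertex is missed, is the delicate point. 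It is precisely here that one-endedness and the infinite order of the generators supply the room that the finite Alspach conjecture lacks.
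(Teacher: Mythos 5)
Your overall architecture (normal form, explicit base construction for $\Z^2$, induction on dimension, gadgets for torsion and extra generators) is genuinely different from the paper, which instead proves a single \emph{Covering Lemma} --- for any almost-standard colouring, any colour $i$ and any finite set $X$ one can recolour finitely many edges so that some $i$-component covers $X$ while all colour components stay double-rays --- and then obtains the decomposition as a limit of such recolourings along an exhaustion of $V(G)$. But your proposal has a genuine gap at its core: a \emph{periodic} brick pattern cannot produce spanning double-rays. If a spanning double-ray $D$ of $\Z^n$ ($n\ge 2$) were invariant under a nontrivial translation $\tau$, then $\tau$ would restrict to a fixed-point-free automorphism of $D\cong\Z$; it cannot act as a reflection (a reflection of a double-ray either fixes a vertex or swaps the endpoints of an edge, forcing $\tau^2=\mathrm{id}$, which is impossible for a nonzero element of $\Z^n$), so it acts as a shift by some $k\neq 0$, and then the image of $D$ in $\Z^n/\langle\tau\rangle$ is a cycle of length $|k|$ that would have to span the infinite graph $\Z^n/\langle\tau\rangle$ --- a contradiction. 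Hence no edge-partition of $E(\Z^2)$ invariant under even a single nontrivial translation can have both classes connected: ``repeating a finite gadget across the plane'' necessarily fragments each colour class into infinitely many double-rays. The same objection applies verbatim to the ``sparse, periodic assignment of transition edges'' in your inductive step from $\Z^{n-1}$ to $\Z^n$.

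This is not a presentational issue but the crux of the problem: as you yourself note, local $2$-regularity and edge-disjointness are easy, and all the content is in global connectivity --- but connectivity \emph{forces} the construction to be aperiodic (spiral-like, or built by an exhaustion anchored at a point), and your sketch contains no mechanism for achieving that. Any correct version of your plan would have to replace ``periodic'' by an explicitly non-invariant linking scheme and then prove connectedness and spanningness of each class, which is where essentially all the work lies; the paper's Covering Lemma, with its three-stage colour-switching argument (capping off double-rays into finite cycles, merging cycles within a coset of $\langle g_1,g_2\rangle$, then across cosets, then re-opening into a double-ray), is one way of organising exactly that work without ever writing down a global pattern. The reduction of the general torsion case to $\Z^d$ via unspecified ``gadgets'' is likewise only a statement of intent, but the periodicity problem is the step that fails as written.
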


We remark that under the assumption that elements of $S$ are non-torsion, the simplification of $G(\Gamma,S)$ is always isomorphic to a Cayley graph $G(\Gamma,S')$ with $S' \subseteq S$ and $S' \cap -S' = \emptyset$, and so our theorem implies the corresponding version of Conjecture~\ref{c:ab} for non-torsion generators, in particular for Cayley graphs of $\mathbb Z ^n$ with arbitrary generators.

In the case when $G=G(\Gamma,S)$ is two-ended, there are additional technical difficulties when trying to construct a decomposition into Hamiltonian double-rays. In particular, since each Hamiltonian double-ray must meet every edge cut an odd number of times, there can be parity reasons why no decomposition exists. One particular two-ended case, namely where $\Gamma \cong \mathbb{Z}$, has been considered by Bryant, Herke, Maenhaut and Webb \cite{BHMW17}, who showed that when $G(\mathbb{Z},S)$ is $4$-regular, then $G$ has a Hamilton decomposition unless there is an odd cut separating the two ends.

The second conjecture about Hamiltonicity that we consider concerns Cartesian products of graphs: Given two graphs $G$ and $H$ the \emph{Cartesian product} (or product) $G \square H$ is the graph with vertex set $V(G) \times V(H)$ in which two vertices $(g,h)$ and $(g',h')$ are adjacent if and only if either
\begin{itemize}
\item $g = g'$ and $h$ is adjacent to $h'$ in $H$, or
\item $h = h'$ and $g$ is adjacent to $g'$ in $G$.
\end{itemize}
Kotzig \cite{K73} showed that the Cartesian product of two cycles has a Hamilton decomposition, and conjectured that this should be true for the product of three cycles. Bermond extended this conjecture to the following:

\begin{conj}[Bermond \cite{B78}]\label{c:prod}
If $G_1$ and $G_2$ are finite graphs which both have Hamilton decompositions, then so does $G_1\square G_2$.
\end{conj}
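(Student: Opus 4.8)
The plan is to reduce the statement to the product-of-cycles result of Kotzig quoted above and then to confront the genuinely hard residual case. Record the given Hamilton decompositions as edge-disjoint unions $E(G_1) = C_1 \cup \cdots \cup C_{k_1}$ and $E(G_2) = D_1 \cup \cdots \cup D_{k_2}$, so that $G_1$ is $2k_1$-regular, $G_2$ is $2k_2$-regular, and $G_1 \square G_2$ is $2(k_1+k_2)$-regular on $|V(G_1)| \cdot |V(G_2)|$ vertices; the target is a partition into $k_1 + k_2$ spanning cycles. The edge set of $G_1 \square G_2$ splits canonically into \emph{horizontal} factors $H_i$, consisting of the copies of $C_i$ over all layers $h \in V(G_2)$, and \emph{vertical} factors $V_j$, consisting of the copies of $D_j$ over all $g \in V(G_1)$. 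Each $H_i$ is a disjoint union of $|V(G_2)|$ copies of $C_i$, and each $V_j$ a disjoint union of $|V(G_1)|$ copies of $D_j$; the key structural point is that each $H_i$ is confined to single layers and so cannot, on its own, be completed to a spanning cycle.

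First I would pair factors. For any $i, j$ the union $H_i \cup V_j$ is isomorphic to the torus $C_{|V(G_1)|} \square C_{|V(G_2)|}$, which by Kotzig's theorem decomposes into two Hamiltonian cycles, and these span every vertex of $G_1 \square G_2$. Hence in the balanced case $k_1 = k_2 = k$ I simply pair $(C_j, D_j)$ for each $j$: the unions $H_j \cup V_j$ exhaust all edges, are mutually edge-disjoint, and yield $2k = k_1 + k_2$ spanning cycles, settling this case outright.

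The real difficulty is the unbalanced case, so assume $k_1 > k_2$. If I naively spend each vertical factor in a Kotzig pairing, there remain the surplus horizontal factors $H_{k_2+1}, \dots, H_{k_1}$ together with no vertical edges at all; being trapped in individual layers, these cannot be reassembled into cycles meeting every layer. The remedy is to refuse to exhaust the vertical edges in disjoint tori and instead to prove a \emph{stitching lemma}: a single vertical factor $V_j$, which already threads all the layers, should be combinable with \emph{several} horizontal factors simultaneously, by rerouting at a controlled set of vertices so that the many layer-copies of the horizontal cycles are spliced into one spanning cycle. Concretely, I would traverse the layers in the cyclic order dictated by $D_j$ and, at a chosen vertex of each layer, exchange a pair of horizontal edges for a pair of vertical edges so as to fuse consecutive layer-copies, bookkeeping carefully which factors are consumed.

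I expect the main obstacle to lie precisely in the connectivity bookkeeping of this stitching step: the edge exchanges must be arranged so that the outcome is a single cycle rather than a disjoint union of shorter ones, which amounts to a parity-and-permutation condition on how the splices compose as one runs around $D_j$. Controlling this condition when many horizontal factors must be absorbed by few vertical factors is exactly where the argument becomes delicate. I would therefore first aim to establish the result under a quantitative hypothesis relating $k_1$ and $k_2$ (for instance $k_1 \le 3k_2$), with the unrestricted ratio being the reason the conjecture remains open in general.
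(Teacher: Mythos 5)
The statement you were asked to prove is labelled as a \emph{conjecture} in the paper: Bermond's 1978 conjecture is stated only as background, is not proved here, and remains open. What the paper actually proves is the countably infinite analogue, Theorem~\ref{t:prod}, and it does so by a quite different mechanism: each $R_i \square S_j$ (for $R_i$, $S_j$ Hamiltonian double-rays of the factors) is a copy of the Cayley graph of $(\Z^2,+)$ with its standard generators, and the Covering Lemma~\ref{lem_mainlemma} is applied recursively to recolour finitely many edges at a time, absorbing larger and larger finite squares into a single double-ray of each colour. The infinite setting is essential to that argument --- there is always room ``further out'' to perform the next local modification and to keep every colour class a disjoint union of double-rays --- and no balance condition between the numbers of factors is needed.

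Read as a proof of the finite conjecture, your proposal has a genuine gap exactly where you locate it. The balanced case $k_1=k_2$ is correct and standard: $H_j \cup V_j \cong C_{|V(G_1)|}\square C_{|V(G_2)|}$, and Kotzig's theorem finishes. But the ``stitching lemma'' for the unbalanced case is asserted, not proved, and it is precisely the open content of the conjecture: when few vertical factors must absorb many horizontal factors, the edge exchanges that fuse the $|V(G_2)|$ layer-copies of one surplus $C_i$ into a spanning cycle consume vertical edges of some $V_j$, and one must simultaneously guarantee that what remains of $V_j$, together with the other surplus horizontal factors, can still be reassembled into spanning cycles; the splices needed for one factor interfere with those needed for the next, and no general bookkeeping is known. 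Partial results in exactly the direction you suggest (quantitative hypotheses relating $k_1$ and $k_2$, or on the sizes of the decompositions) are due to Stong \cite{S91}, as the paper notes. So your proposal correctly maps the terrain and honestly flags the missing step, but it is not a proof --- nor does the paper contain one, since this statement is presented there as an open conjecture rather than a theorem.
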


Alspach and Godsil \cite{AG85} showed that the product of any finite number of cycles has a Hamilton decomposition, and Stong \cite{S91} proved certain cases of Conjecture~\ref{c:prod} under additional assumptions on the number of Hamilton cycles in the decomposition of $G_1$ and $G_2$ respectively.

Applying techniques we developed to prove Theorem~\ref{t:ZN}, we show as our second main result of this paper that Conjecture~\ref{c:prod} holds for countably infinite graphs.

\begin{restatable}{theorem}{main}\label{t:prod}
If $G$ and $H$ are countable graphs which both have Hamilton decompositions, then so does their product $G \square H$.
\end{restatable}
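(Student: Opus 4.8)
The plan is to reduce the statement to a base case about products of individual rays and cycles, and then to assemble the full decomposition by an exhaustion argument of the kind developed for Theorem~\ref{t:ZN}. First I fix Hamilton decompositions $E(G)=\bigsqcup_{i}C_i$ and $E(H)=\bigsqcup_{j}D_j$ (with $i,j$ ranging over finite or countable index sets), where each $C_i$ and $D_j$ is a spanning cycle or double-ray of the respective factor. This induces a colouring of $E(G\square H)$ by the combined index set: a horizontal edge gets the colour of the $C_i$ carrying its $G$-projection, and a vertical edge the colour of the $D_j$ carrying its $H$-projection. Each colour class is then spanning and $2$-regular, but typically \emph{disconnected} --- it is a disjoint union of double-rays, one per layer. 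The entire problem is to exchange edges between colour classes so that each class becomes a single spanning double-ray, while the classes stay edge-disjoint and cover every edge exactly once. Since a decomposition into double-rays presupposes an infinite graph, I may assume $G\square H$ is infinite, hence at least one factor is infinite; this is the case I treat.

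The base case I would isolate is: if $R$ and $S$ are each a spanning double-ray or finite cycle, at least one a double-ray, then $R\square S$ decomposes into two spanning double-rays. When both are double-rays this is literally the instance $\Gamma=\Z^2$, $S=\Set{(1,0),(0,1)}$ of Theorem~\ref{t:ZN}, which is one-ended with non-torsion generators and $4$-regular, so it decomposes into two double-rays. The mixed case (double-ray $\times$ finite cycle) is not directly covered, since the cycle contributes a torsion generator, but I expect it to succumb to the same diagonal construction on the grid-like structure of the product.

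With the base case as a guide, the assembly is the crux. One might hope to group all horizontal edges with a single vertical colour; this realises the subgraph $G\square D_1$ but consumes \emph{every} horizontal edge, stranding the remaining vertical colours as disconnected unions of per-layer double-rays. Hence the horizontal edges must be apportioned: each vertical colour class needs to borrow a sparse set of horizontal edges to splice its per-layer double-rays into one spanning double-ray, and symmetrically for the horizontal colours. I would carry this out by an exhaustion $V_1\subseteq V_2\subseteq\cdots$ of the countable vertex set, handling all colour classes simultaneously: at each finite stage the classes are finite systems of paths, and at each step I absorb the next vertex and perform a bounded number of local edge-swaps that reduce the number of components of some not-yet-connected class, always preserving edge-disjointness and $2$-regularity in the interior. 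Dovetailing the two kinds of task --- ``absorb the next vertex'' and ``connect the next component'' --- then guarantees that in the limit every vertex lies on every class, every edge is used exactly once, and each class is a single spanning double-ray.

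The main obstacle is exactly this last, global requirement of connectivity. Keeping each class spanning, $2$-regular and edge-partitioned is a local condition, easily preserved along the exhaustion; forcing each class to converge to a connected double-ray rather than a disjoint union of double-rays is the delicate point, since a local swap that joins two components of one colour may split another colour or close off a finite cycle. The heart of the argument --- and where the techniques behind Theorem~\ref{t:ZN} do the real work --- is a bookkeeping scheme that repeatedly and safely lowers the component count of each class without undoing progress elsewhere, exploiting the fact that double-rays, unlike finite cycles, can always be re-routed and spliced through the grid-like local structure of the product.
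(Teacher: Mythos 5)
Your setup---the induced colouring of $E(G\square H)$ by the combined index set, with each colour class starting life as a spanning, $2$-regular but disconnected union of per-layer double-rays---matches the paper's proof exactly, and you have correctly located the crux: splicing the components of one class together without breaking another. But your proposal stops at naming that crux rather than resolving it. The ``bookkeeping scheme'' of ``bounded local edge-swaps'' that ``safely lowers the component count of each class without undoing progress elsewhere'' is precisely the content that has to be supplied, and you give no mechanism for it. As you yourself observe, a swap that joins two components of one colour may split another colour or close off a finite cycle; nothing in your outline rules this out, and handling ``all colour classes simultaneously'' makes the interference problem strictly harder than it needs to be.

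The paper's resolution is to never touch more than two colours at a time. Fix a surjection $f\colon\N\to I\cup J$ hitting every colour infinitely often; at stage $k$ one works only inside the spanning subgraph $R_{f(k)}\square S_1$ (or $R_1\square S_{f(k)}$ if $f(k)\in J$), which is isomorphic to the Cayley graph of $(\Z^2,+)$ with the standard generators and carries exactly the two colours $f(k)$ and $1$. On that subgraph the Covering Lemma~\ref{lem_mainlemma} applies verbatim: it recolours finitely many edges so that a single $f(k)$-component covers a prescribed finite square, agrees with the previous colouring on an ever-growing core, and---via the cap-off and combining steps and the notion of safe standard squares---guarantees that \emph{every} other monochromatic component remains a double-ray. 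Iterating and taking $T_\ell=\bigcup_{k\in f^{-1}(\ell)}D_k$ then gives the decomposition. This reduction of each splicing step to the already-proved $\Z^2$ machinery is exactly the step your proposal leaves open, so as written the argument is a plan rather than a proof. (Your concern about the mixed case of a double-ray times a finite cycle is reasonable, but note that the paper's proof, like yours, only treats decompositions of both factors into double-rays.)
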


The paper is structured as follows: In Section~\ref{sec_group} we mention some group theoretic results and definitions we will need. In Section~\ref{sec_cov} we state our main lemma, the \emph{Covering Lemma}, and show that it implies Theorem~\ref{t:ZN}. The proof of the Covering Lemma will be the content of Section~\ref{sec_proof}. In Section \ref{sec_products} we apply our techniques to prove Theorem~\ref{t:prod}. Finally, in Section~\ref{sec_open} we list  open problems and possible directions for further work.

\section{Notation and preliminaries}\label{sec_group}

If $G=(V,E)$ is a graph, and $A,B \subseteq V$, we denote by $E(A,B)$ the set of edges between $A$ and $B$, i.e.\ $E(A,B) = \set{(x,y) \in E}:{x \in A, y \in B}$. For $A \subseteq V$ or $F \subseteq E$ we write $G[A]$ and $G[F]$ for the subgraph of $G$ induced by $A$ and $F$ respectively. 

For $A,B \subseteq \Gamma$ subsets of an abelian group $\Gamma$ we write $-A := \set{-a}:{a \in A}$ and $A + B := \set{a+b}:{a \in A, b \in B} \subseteq \Gamma$. If $\Delta$ is a subgroup of $\Gamma$, and $A \subset \Gamma$ a subset, then $A^\Delta = \set{a + \Delta}:{a \in A}$ denotes the family of corresponding cosets. If $g \in \Gamma$ we say that the \emph{order} of $g$ is the smallest $k \in \N$ such that $k\cdot g = 0$. If such a $k$ exists, then $g$ is a \emph{torsion element}. Otherwise, we say the order of $g$ is infinite and $g$ is a \emph{non-torsion} element. For $k \in \N$ we write $[k] = \Set{1,2,\ldots, k}$.

The following terminology will be used throughout.

\begin{defn}
\label{defn_isubgraph}
Given a graph $G$, an edge-colouring $c \colon E(G) \rightarrow [s]$ and a colour $i \in [s]$, the \emph{$i$-subgraph} is the subgraph of $G$ induced by the edge set $c^{-1}(i)$, and the \emph{$i$-components} are the components of the $i$-subgraph.
\end{defn}

\begin{defn}[Standard and almost-standard colourings of Cayley graphs]
\label{defn_colourings}
Let $\Gamma$ be an infinite abelian group, $S = \Set{g_1,g_2, \ldots , g_s}$ a finite generating set for $\Gamma$ such that every $g_i \in S$ has infinite order, and let $G$ be the Cayley graph $G(\Gamma,S)$.
\begin{itemize}
\item The \emph{standard colouring} of $G$ is the edge colouring $c_{\textnormal{std}}  \colon E(G) \rightarrow [s]$ such that $c_{\textnormal{std}}\big((x,x + g_i)\big) = i$ for each $x \in \Gamma, g_i \in S$. 
\item Given a subset $X \subseteq V(G)$ we say that a colouring $c$ is \emph{standard on $X$} if $c$ agrees with $c_{\textnormal{std}}$ on $G[X]$. Similarly if $F \subset E(G)$ we say that $c$ is \emph{standard on $F$} if $c$ agrees with $c_{\textnormal{std}}$ on $F$.
\item A colouring $c  \colon  E(G) \rightarrow [s]$ is \emph{almost-standard} if the following are satisfied:
\begin{itemize}
\item there is a finite subset $F \subseteq E(G)$ such that $c$ is standard on $E(G) \setminus F$;
\item for each $i \in [s]$ the $i$-subgraph is spanning, and each $i$-component is a double-ray.
\end{itemize}
\end{itemize}
\end{defn}

\begin{defn}[Standard squares and double-rays]
Let $\Gamma$ and $S$ be as above. Given $x\in \Gamma$ and $g_i \neq g_j \in S$, we call 
\[
\stdsquare{x}{g_i}{g_j} := \Set{(x,x+g_i),(x,x+g_j),(x+g_i ,x + g_i + g_j),(x+g_j,x + g_i + g_j)}
\]
an \emph{$(i,j)$-square with base point $x$}, and 
\[
\doubleray{x}{g_i} := \set{(x+n g_i,x+(n+1)g_i)}:{ n \in \Z}
\]
an \emph{$i$-double-ray with base point $x$}. 

Moreover, given a colouring $c \colon E(G(\Gamma,S)) \rightarrow [s]$ we call $\stdsquare{x}{g_i}{g_j}$ and $\doubleray{x}{g_i}$ an \emph{$(i,j)$-standard square} and \emph{$i$-standard double-ray} if $c$ is standard on 
$\stdsquare{x}{g_i}{g_j}$ and $\doubleray{x}{g_i}$ respectively.
\end{defn}

Since $\Gamma$ is an abelian group, every $\stdsquare{x}{g_i}{g_j}$ is a $4$-cycle in $G(\Gamma,S)$ (provided $g_i \neq -g_j$), and since $S$ contains no torsion elements of $\Gamma$, $\doubleray{x}{g_k}$ really is a double-ray in the Cayley graph $G(\Gamma,S)$.

Let $\Gamma$ be a finitely generated abelian group. By the Classification Theorem for finitely generated abelian groups (see e.g.\ \cite{F15}), there are integers $n,q_1,\ldots,q_r$ such that $\Gamma \cong \mathbb{Z}^n \oplus \bigoplus_{i=1}^r \mathbb{Z}_{q_i}$, where $\mathbb{Z}_{q}$ is the additive group of the integers modulo $q$. In particular, for each $\Gamma$ there is an integer $n$ and a finite abelian group $\Gamma_{\text{fin}}$ such that $\Gamma \cong \mathbb{Z}^n \oplus \Gamma_{\text{fin}}$.

The following structural theorem for the ends of finitely generated abelian groups is well-known:

\begin{theorem}\label{thm:one}
For a finitely generated group $\Gamma \cong \mathbb{Z}^n \oplus \Gamma_{\text{fin}}$, the following are equivalent:
\begin{itemize}
\item $n \geq 2$,
\item there exists a finite generating set $S$ such that $G(\Gamma,S)$ is one-ended, and
\item for all finite generating sets $S$, the Cayley graph $G(\Gamma,S)$ is one-ended.
\end{itemize}
\end{theorem}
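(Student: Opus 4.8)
The plan is to reduce all three conditions to a single statement about the number of ends of $\Gamma$. Recall that for a locally finite connected graph, the number of ends is the supremum, over all finite vertex sets $S$, of the number of infinite components left after deleting $S$; since $S$ is finite and each $G(\Gamma,S)$ is $2s$-regular, this notion applies to every Cayley graph under consideration. The crucial input is the well-known fact that the number of ends of a finitely generated group is a quasi-isometry invariant, and in particular does not depend on the choice of finite generating set (see any standard reference on geometric group theory). Granting this, the second and third bullet points are equivalent, as each is simply the assertion that $\Gamma$ has exactly one end; it therefore remains to show that this holds precisely when $n \geq 2$.

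To identify the number of ends of $\Gamma \cong \mathbb{Z}^n \oplus \Gamma_{\text{fin}}$, I would pass to the subgroup $\mathbb{Z}^n \leq \Gamma$, which has finite index $|\Gamma_{\text{fin}}|$. Since a finitely generated group is quasi-isometric to any of its finite-index subgroups, $\Gamma$ and $\mathbb{Z}^n$ are quasi-isometric and hence have the same number of ends. It thus suffices to count the ends of the standard Cayley graph of $\mathbb{Z}^n$.

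This last count is an elementary connectivity argument on the integer grid. For $n = 0$ the graph is a single vertex and has no ends; for $n = 1$ it is a double-ray, and deleting a finite interval leaves two infinite components, so there are two ends. For $n \geq 2$, every finite vertex set is contained in some box $[-N,N]^n$, and I would check that the complement of such a box is connected: it is the union of the $2n$ grid half-spaces $\{x_i > N\}$ and $\{x_i < -N\}$, each of which is connected, and any two of them can be joined through a third (for instance $\{x_1 > N\}$ and $\{x_1 < -N\}$ meet $\{x_2 > N\}$, whose pairwise intersections with them are nonempty precisely because $n \geq 2$), so their union is connected. Hence $\mathbb{Z}^n$, and therefore $\Gamma$, has exactly one end if and only if $n \geq 2$, yielding all three equivalences.

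I do not anticipate a genuine obstacle: the statement is classical, and the only points requiring care are the two appeals to quasi-isometry invariance of the number of ends (under change of generating set, and under passage to the finite-index subgroup $\mathbb{Z}^n$), together with the routine verification that the punctured grid $\mathbb{Z}^n \setminus [-N,N]^n$ is connected for $n \geq 2$ but disconnected for $n = 1$.
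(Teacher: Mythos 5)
Your argument is correct, and it follows the same overall route as the paper for the first half: both reduce the equivalence of the second and third bullet points to the standard fact that the number of ends of a finitely generated group does not depend on the choice of finite generating set (the paper cites Scott--Wall, Proposition~5.2, for exactly this). Where you diverge is in the remaining equivalence with $n \geq 2$: the paper again simply cites Scott--Wall (Theorem~5.12), whereas you give a self-contained proof by passing to the finite-index subgroup $\mathbb{Z}^n \leq \Gamma$ (using quasi-isometry invariance of ends once more) and then counting the ends of the standard grid directly via the connectivity of $\mathbb{Z}^n \setminus [-N,N]^n$ for $n \geq 2$ versus its disconnection into two rays for $n=1$. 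Your elementary count is sound --- the complement of a box is the union of the $2n$ coordinate half-spaces, each connected, with a connected intersection pattern precisely when $n \geq 2$ --- and it buys a more transparent, citation-free treatment of the case distinction, at the modest cost of invoking quasi-isometry invariance twice rather than once. Either route is acceptable; yours is arguably preferable in a self-contained exposition, while the paper's is shorter given that the references are standard.
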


\begin{proof}
See e.g.\ \cite[Proposition~5.2]{scott1979topological} for the fact the number of ends of $G(\Gamma,S)$ is independent of the choice of the generating set $S$, and \cite[Theorem~5.12]{scott1979topological} for the equivalence with the first item.
\end{proof}

A group $\Gamma$ satisfying one of the conditions from Theorem~\ref{thm:one} is called \emph{one-ended}.

\begin{cor}\label{c:oneend}
Let $\Gamma$ be an abelian group, $S=\{g_1,\ldots, g_s\}$ be a finite generating set such that the Cayley graph $G(\Gamma,S)$ is one-ended. Then, for every $g_i \in S$ of infinite order, there is some $g_j \in S$ such that $\pair{g_i,g_j} \cong (\Z^2,+)$.
\end{cor}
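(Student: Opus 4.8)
The plan is to reduce the statement to a rank computation in the torsion-free quotient of $\Gamma$. Since $G(\Gamma,S)$ is one-ended, Theorem~\ref{thm:one} lets me write $\Gamma \cong \mathbb{Z}^n \oplus \Gamma_{\text{fin}}$ with $n \geq 2$. I would then pass to the torsion subgroup $T \leq \Gamma$ (the copy of $\Gamma_{\text{fin}}$) and the quotient map $\pi \colon \Gamma \to \Gamma/T \cong \mathbb{Z}^n$. Because $\pi$ is surjective and $S$ generates $\Gamma$, the set $\pi(S) = \Set{\pi(g_1), \ldots, \pi(g_s)}$ generates $\mathbb{Z}^n$. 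Note also that $\pi(g_i) \neq 0$, since $g_i$ has infinite order and hence $g_i \notin T$.

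The key observation I would establish is that, for our fixed $g_i$ of infinite order and any $g_j \in S$, one has $\langle g_i, g_j \rangle \cong (\mathbb{Z}^2,+)$ if and only if $\pi(g_i)$ and $\pi(g_j)$ are $\mathbb{Z}$-linearly independent in $\mathbb{Z}^n$. Indeed, $\langle g_i, g_j \rangle$ is the image of the homomorphism $\mathbb{Z}^2 \to \Gamma$, $(a,b) \mapsto a g_i + b g_j$, so it is isomorphic to $\mathbb{Z}^2$ exactly when this map is injective, i.e.\ when $g_i, g_j$ satisfy no nontrivial integer relation. Any such relation projects under $\pi$ to a relation between $\pi(g_i)$ and $\pi(g_j)$; conversely, a nontrivial relation $a\,\pi(g_i) + b\,\pi(g_j) = 0$ means $a g_i + b g_j \in T$, and multiplying by the order $m$ of this torsion element yields the nontrivial relation $(ma)g_i + (mb)g_j = 0$ in $\Gamma$. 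Hence linear independence upstairs and downstairs coincide.

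With this equivalence in hand, the corollary follows by a short rank argument. Suppose for contradiction that no $g_j \in S$ works. Then every $\pi(g_j)$ is $\mathbb{Z}$-linearly dependent with $\pi(g_i)$, which, since $\pi(g_i) \neq 0$, forces every $\pi(g_j)$ to lie in the one-dimensional $\mathbb{Q}$-subspace $\mathbb{Q}\,\pi(g_i)$ of $\mathbb{Q}^n$. But then $\pi(S)$ generates a subgroup of $\mathbb{Z}^n$ of rank at most $1$, contradicting the fact that $\pi(S)$ generates all of $\mathbb{Z}^n$, which has rank $n \geq 2$. Therefore some $g_j \in S$ has $\pi(g_j)$ linearly independent from $\pi(g_i)$, and for this $g_j$ we obtain $\langle g_i, g_j \rangle \cong \mathbb{Z}^2$, as required. (In particular $g_j \neq g_i$, since otherwise $\pi(g_j) = \pi(g_i)$ would be dependent with $\pi(g_i)$.)

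The only delicate point — and the step I would be most careful about — is the torsion bookkeeping in the equivalence: one must verify that a dependence holding only modulo $T$ can be promoted to a genuine dependence in $\Gamma$ by clearing the torsion, and that this clearing does not accidentally trivialise the relation. Everything else is a routine rank/dimension count, so I expect no substantial obstacle beyond this.
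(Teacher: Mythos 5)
Your proof is correct and follows essentially the same strategy as the paper: assume no suitable $g_j$ exists, deduce that $\Gamma$ has free rank at most $1$, and contradict the characterisation of one-endedness in Theorem~\ref{thm:one}. The only cosmetic difference is that the paper passes to the quotient $\Gamma/\langle g_i\rangle$ and observes it must be finite, whereas you pass to the torsion-free quotient $\Gamma/T \cong \Z^n$ and count rank there; both routes require the same torsion-clearing observation you flag as the delicate point.
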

\begin{proof}
Suppose not. It follows that in $\Gamma / \langle g_i \rangle$ every element has finite order, and since it is also finitely generated, it is some finite group $\Gamma_f$ such that $\Gamma \cong \Z \oplus \Gamma_f$. Thus, by Theorem~\ref{thm:one}, $G$ is not one-ended, a contradiction.
\end{proof}

\section{The covering lemma and a high-level proof of Theorem~\ref{t:ZN}}\label{sec_cov}

Every Cayley graph $G(\Gamma,S)$ comes with a natural edge colouring $c_{\textnormal{std}}$, where we colour an edge $(x,x+g_i)$ with $x \in \Gamma$ and $g_i \in S$ according to the index $i$ of the corresponding generating element $g_i$. If every element of $S$ has infinite order, then every $i$-subgraph of $G(\Gamma,S)$ consists of a spanning collection of edge-disjoint double-rays, see  Definitions~\ref{defn_isubgraph} and \ref{defn_colourings}. So, it is perhaps a natural strategy to try to build a Hamiltonian decomposition by combining each of these monochromatic collections of double-rays into a single monochromatic spanning double-ray. 

Rather than trying to do this directly, we shall do it in a series of steps: given any colour $i \in [s] = |S|$ and any finite set $X \subset V(G)$, we will show that one can change the standard colouring at finitely many edges so that there is one particular double-ray in the colour $i$ which covers $X$. Moreover, we can ensure that the resulting colouring maintains enough of the structure of the standard colouring that we can repeat this process inductively: it should remain \emph{almost standard}, i.e.\ all monochromatic components are still double-rays, see Definition~\ref{defn_colourings}. By taking a sequence of sets $X_1 \subseteq X_2 \subseteq \cdots$ exhausting the vertex set of $G$, and varying which colour $i$ we consider, we ensure that in the limit, each colour class consists of a single spanning double-ray, giving us the desired Hamilton decomposition.

In this section, we formulate our key lemma, namely the Covering Lemma~\ref{lem_mainlemma}, which allows us to do each of these steps. We will then show how Theorem~\ref{t:ZN} follows from the Covering Lemma. The proof of the Covering Lemma is given in Section \ref{sec_proof}.

\begin{lemma}[Covering lemma]
\label{lem_mainlemma}
Let $\Gamma$ be an infinite, one-ended abelian group, $S = \Set{g_1,g_2, \ldots , g_s}$ a finite generating set such that every $g_i \in S$ has infinite order, and $G=G(\Gamma,S)$ the corresponding Cayley graph.

Then for every almost-standard colouring $c$ of $G$, every colour $i$ and every finite subset $X \subseteq V(G)$, there exists an almost-standard colouring $\hat{c}$ of $G$ such that
\begin{itemize}
\item $\hat{c}=c$ on $E(G[X])$, and
\item some $i$-component in $\hat{c}$ covers $X$.
\end{itemize}
\end{lemma}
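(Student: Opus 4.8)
The plan is to merge, one at a time, the finitely many $i$-components that meet the finite set $X$ into a single $i$-double-ray, using only local recolourings performed far away from $X$.

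First I would record the basic operation. Given an $(i,j)$-square $\stdsquare{x}{g_i}{g_j}$ on which the colouring is standard (with $g_i\neq g_j$ and $g_j\neq -g_i$, so that it is a genuine $4$-cycle), the \emph{swap} at this square exchanges the two colours, recolouring the two $g_i$-edges with $j$ and the two $g_j$-edges with $i$. This alters exactly four edges and preserves every colour-degree. Tracing the $i$-edges shows that the swap removes the $i$-edges $(x,x+g_i)$ and $(x+g_j,x+g_i+g_j)$ from $\doubleray{x}{g_i}$ and $\doubleray{x+g_j}{g_i}$, and joins the two left halves into one double-ray and the two right halves into another; by the symmetric computation the two $j$-double-rays through $x$ and $x+g_i$ are likewise reconnected into two double-rays. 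Hence, as long as the four edges were standard beforehand, a single swap keeps every colour class a disjoint union of double-rays, and so preserves almost-standardness.

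The key point is that if the swap is performed at a base point lying strictly beyond $X$ in the $g_i$-direction, then the $X$-parts of the two $i$-double-rays land in the \emph{same} reconnected half. Concretely, choosing $x=x_0+m g_i$ with $m$ large enough that both $\doubleray{x_0}{g_i}$ and $\doubleray{x_0+g_j}{g_i}$ meet $X$ only at $g_i$-coordinates below $m$, the left-half double-ray produced by the swap contains the whole of $\bigl(X\cap\doubleray{x_0}{g_i}\bigr)\cup\bigl(X\cap\doubleray{x_0+g_j}{g_i}\bigr)$. Thus one swap, carried out outside $X$, merges the $X$-portions of two $i$-double-rays differing by a generator $g_j$ with $\pair{g_i,g_j}\cong\Z^2$ — such a $g_j$ exists by Corollary~\ref{c:oneend} — while changing no edge of $G[X]$. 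Iterating, I would enumerate the finitely many $i$-double-rays meeting $X$ as $D_1,\dots,D_t$ and repeatedly replace a pair by its merged double-ray, each step reducing by one the number of $i$-components meeting $X$, until a single one covers $X$.

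The main obstacle is the global bookkeeping needed to turn this local move into a valid finite sequence of swaps, where three issues must be controlled simultaneously. \textbf{(a)} Two swaps applied to the same pair of parallel double-rays at different $g_i$-columns create a finite monochromatic cycle, in colour $i$ \emph{and} in the auxiliary colour $j$; to avoid this I would place all swaps at well-separated $g_i$-columns so that no double-ray of either colour is pinched twice. \textbf{(b)} The $i$-double-rays meeting $X$ need not be pairwise parallel: those in different cosets of $\pair{g_i,g_j}$ cannot be merged by $g_j$-swaps alone, so one must chain merges through several generators, at each stage choosing a generator $g_\ell$ for which $\pair{g_i,g_\ell}\cong\Z^2$ and in whose grid the current pair is adjacent. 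Here one-endedness is essential, being exactly what Corollary~\ref{c:oneend} supplies, and it guarantees the two-dimensional room in each relevant direction needed to route these merges. \textbf{(c)} Every swap must stay outside $X$; since $X$ is finite and $G$ is locally finite with a single end, all double-ray tails eventually enter the one infinite component of $G-X$, leaving ample space beyond $X$ to perform the swaps without altering $G[X]$. Verifying that a finite, cycle-free, $X$-avoiding sequence of swaps merging all $t$ double-rays can be arranged to meet these constraints at once is the technical heart of the argument, and is what the detailed proof in Section~\ref{sec_proof} must establish.
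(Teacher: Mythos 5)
Your basic switching move is correct, and the trick of performing it far beyond $X$ in the $g_i$-direction --- so that the $X$-portions of both double-rays land in the same reconnected half --- is a genuinely different merging mechanism from the paper's: the paper never merges two double-rays directly, but instead first ``caps off'' all relevant $i$-components into finite cycles, merges cycles into cycles, and only at the very end re-absorbs the finite cycle into a double-ray. Your points \textbf{(a)} and \textbf{(c)} are manageable bookkeeping. The gap is in \textbf{(b)}.

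You assert that whenever two $i$-components in different cosets of $\pair{g_i,g_j}$ must be merged, you can choose a generator $g_\ell$ with $\pair{g_i,g_\ell}\cong\Z^2$ ``in whose grid the current pair is adjacent'', and you cite Corollary~\ref{c:oneend} for this. But Corollary~\ref{c:oneend} supplies only \emph{one} such direction $g_j$; it says nothing about the generators that actually join the relevant cosets, and in general no admissible choice exists. Concretely, take $\Gamma=\Z^2$ with $S=\Set{g_1,g_2,g_3}$, $g_1=(2,0)$, $g_2=(0,1)$, $g_3=(1,0)$, and colour $i=1$. Here $\pair{g_1,g_2}=2\Z\oplus\Z$ has index $2$, the unique generator joining its two cosets is $g_3$, and $\pair{g_1,g_3}=\pair{g_3}\cong\Z$, not $\Z^2$. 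If $X$ contains $(0,0)$ and $(1,0)$, these lie on standard $1$-double-rays in different cosets, so any chain of merges must at some point use a $(1,3)$-square $\stdsquare{y}{g_1}{g_3}$. In such a square the two $3$-coloured edges $(y,y+g_3)$ and $(y+g_1,y+g_1+g_3)$ lie on the \emph{same} standard $3$-double-ray, because $g_1=2g_3$; so your argument that ``the two $j$-double-rays through $x$ and $x+g_i$ are likewise reconnected into two double-rays'' does not apply, and nothing in your proposal shows that the auxiliary colour class survives the switch as a union of double-rays. This is precisely the case the paper's extra machinery is built for: Lemma~\ref{l:crossingwithstandard} shows that the two $1$-edges \emph{cross} on that single standard $3$-double-ray, and the third item of Lemma~\ref{l:easyflip2} (the ``safe square'' analysis) shows that the switch then reconnects it into a single double-ray. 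Without this crossing/safety ingredient --- or a proof that merge chains can always avoid such generators, which the example refutes --- your induction cannot be completed, and the claim that one-endedness ``guarantees the two-dimensional room in each relevant direction'' is an overstatement of what Corollary~\ref{c:oneend} provides.
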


\begin{proof}[Proof of Theorem~\ref{t:ZN} given Lemma~\ref{lem_mainlemma}]
Fix an enumeration $V(G)=\set{v_n}:{n \in \N}$. Let $X_0 = D'_0 = \{v_0\}$ and $c_0 = c_{\textnormal{std}}$. For each $n \geq 1$ we will recursively construct almost standard colourings $c_n \colon E(G) \to [s]$, finite subsets $X_n \subset V(G)$, ($n$ mod $s$)-components $D_n$ of $c_n$ and finite paths $D'_n \subseteq D_n$ such that for every $n \in \N$
\begin{enumerate}
\item $X_{n-1} \cup \singleton{v_n} \subseteq X_{n}$,
\item $V(D'_{n-1}) \subseteq X_n$,
\item $X_n \subseteq V(D'_n)$,
\item $D'_n$ properly extends the path $D'_{n - s}$ (the `previous' path of colour $n$ mod $s$) in both endpoints of $D'_{n - s}$, and
\item $c_{n}$ agrees with $c_{n-1}$ on $E(G[X_{n}])$.
\end{enumerate}

Suppose inductively for some $n \in \N$ that $c_n$, $X_n$, $D_n$ and $D'_n$ have already been defined. Choose some $X_{n +1} \supseteq X_n \cup \singleton{v_n}$ large enough such that (1) and (2) are satisfied. Applying Lemma~\ref{lem_mainlemma} with input $c_n$ and $X_{n+1}$ provides us with a colouring $c_{n+1}$ such that (5) is satisfied and some ($n+1$ mod $s$)-component $D_{n+1}$ covers $X_{n+1}$. Since $c_{n+1}$ is almost standard, $D_{n+1}$ is a double-ray. Furthermore, since $c_{n+1}$ agrees with $c_n$ on $E(G[X_{n+1}])$, by the inductive hypothesis it agrees with $c_k$ on $E(G[X_{k+1}])$ for each $k \leq n$.

Therefore, since $D'_{n+1-s} \subset X_{n-s+2}$ is a path of colour ($n+1$ mod $s$) in $c_{n+1-s}$, it follows that $D'_{n+1-s} \subset D_{n+1}$ and so we can extend $D'_{n+1-s}$ to a sufficiently long finite path $D'_{n+1} \subset D_{n+1}$ such that (3) and (4) are satisfied at stage $n+1$.

Once the construction is complete, we define $T_1, \ldots, T_{s} \subset G$ by 
$$T_i = \bigcup_{n \equiv i \mod s } D'_n$$
and claim that they form a decomposition of $G$ into edge-disjoint Hamiltonian double-rays. Indeed, by (4), each $T_i$ is a double-ray. That they are edge-disjoint can be seen as follows: Suppose for a contradiction that $e \in E(T_i) \cap E(T_j)$. Choose $n(i)$ and $n(j)$ minimal such that $e \in E(D'_{n(i)}) \subset E(T_i)$ and $e \in E(D'_{n(j)}) \subset E(T_j)$. We may assume that $n(i) < n(j)$, and so $e \in E(G[X_{n(i)+1}])$ by (2). Furthermore, by $(5)$ it follows that $c_{n(j)}$ agrees with $c_{n(i)}$ on $E(G[X_{n(i)+1}])$. However by construction $c_{n(j)}(e) = j \neq i = c_{n(i)}(e)$ contradicting the previous line.

Finally, to see that each $T_i$ is spanning, consider some $v_n \in V(G)$. By (1), $v_n \in X_{n}$. Pick $n' \geq n$ with $n' \equiv i \mod s$. Then by (3), $D'_{n'} \subset T_i$ covers $X_{n'}$ which in turn contains $v_n$, as $v_n \in X_{n} \subseteq X_{n'}$ by (1).
\end{proof}

\section{Proof of the Covering Lemma}\label{sec_proof}

\subsection{Blanket assumption.} Throughout this section, let us now fix 
\begin{itemize}
\item a one-ended infinite abelian group $\Gamma$ with finite generating set $S = \Set{g_1, \ldots, g_s}$ such that every element of $S$ has infinite order,
\item an almost-standard colouring $c$ of the Cayley graph $G=G(\Gamma,S)$,
\item a finite subset $X \subseteq \Gamma$  such that $c$ is standard on $V(G) \setminus X$,
\item a colour $i$, say $i = 1$, and corresponding generator $g_1 \in S$, for which we want to show Lemma \ref{lem_mainlemma}, and finally
\item a second generator in $S$, say $g_2$, such that $\Delta:=\pair{g_1,g_2} \cong (\Z^2,+)$, see Corollary~\ref{c:oneend}. 
\end{itemize}

\subsection{Overview of proof}
We want to show Lemma \ref{lem_mainlemma} for the Cayley graph $G$, colouring $c$, generator $g_1$ and finite set $X$. The cosets of $\pair{g_1,g_2}$ in $\Gamma$ cover $V(G)$, and in the standard colouring the edges of colour $1$ and $2$ form a grid on $\pair{g_1,g_2}$. So, since $c$ is almost-standard, on each of these cosets the edges of colour $1$ and $2$ will look like a grid, apart from on some finite set.

Our aim is to use the structure in these grids to change the colouring $c$ to one satisfying the conclusions of Lemma \ref{lem_mainlemma}. It will be more convenient to work with large finite grids, which we require, for technical reasons, to have an even number of rows. This is the reason for the slight asymmetry in the definition below.

\begin{notation}
Let $g_i,g_j \in \Gamma$. For $N,M \in \N$ we write
$$\pseudogrid{g_i}{g_j}{N}{M} := \set{n g_i + m g_j}:{n,m \in \Z, \; -N \leq n \leq N, \; -M < m \leq M} \subseteq \pair{g_i,g_j} \subseteq \Gamma.$$
\end{notation}

The structure of our proof can be summarised as follows. First, in Section~\ref{subsec_1}, we will show that there is some $N_0$ and some `nice' finite set of $P$ of representatives of cosets of $\pair{g_1,g_2}$ such that $P + \pseudogrid{g_1}{g_2}{N_0}{N_0}$ covers $X$. We will then, in Section~\ref{subsec_1.5} pick sufficiently large numbers $N_0 < N_1 < N_2 < N_3$ and consider the grids $P + \pseudogrid{g_1}{g_2}{N_3}{N_1}$. Using the structure of the grids we will make local changes to the colouring inside $P + (\pseudogrid{g_1}{g_2}{N_3}{N_1} \setminus \pseudogrid{g_1}{g_2}{N_0}{N_0})$ to construct our new colouring $\hat{c}$. This new colouring $\hat{c}$ will then agree with $c$ on the subgraph induced by $P + \pseudogrid{g_1}{g_2}{N_0}{N_0} \supseteq X$, and be standard on $V(G) \setminus \big(P + \pseudogrid{g_1}{g_2}{N_3}{N_1} \big)$, and hence, as long as we ensure all the colour components are double-rays, almost-standard.

These local changes will happen in three steps. First, in Step~\ref{step_co}, we will make local changes inside $x_\ell + (\pseudogrid{g_1}{g_2}{N_3}{N_1} \setminus \pseudogrid{g_1}{g_2}{N_2}{N_1})$ for each $x_\ell \in P$, in order to make every $i$-component meeting $P + \pseudogrid{g_1}{g_2}{N_2}{N_1}$ a finite cycle. 

Next, in Step~\ref{step_cc}, we will make local changes inside $x_\ell + (\pseudogrid{g_1}{g_2}{N_2}{N_1} \setminus \pseudogrid{g_1}{g_2}{N_1}{N_1})$ for each $x_\ell \in P$, in order to combine the cycles meeting this translate of the grid into a single cycle. 

Finally, in Step~\ref{step_mainlemma_quant}, we will make local changes inside $P + (\pseudogrid{g_1}{g_2}{N_1}{N_1} \setminus \pseudogrid{g_1}{g_2}{N_0}{N_0})$, in order to join the cycles for different $x_\ell$ into a single cycle covering $P + \pseudogrid{g_1}{g_2}{N_0}{N_0}$. We then make one final local change to turn this finite cycle into a double-ray.

\subsection{Identifying the relevant cosets}
\label{subsec_1}

\begin{lemma}
\label{lem_coveringpath}
There exist $N_0 \in \N$ and a finite set $P = \Set{x_0, \ldots, x_t} \subset \Gamma$ such that
\begin{itemize}
\item $P^\Delta = \Set{x_0 + \Delta, \ldots, x_t + \Delta}$ is a path in $G(\Gamma / \Delta, \p{S \setminus \Set{g_1,g_2}}^\Delta)$, and
\item $X \subseteq P + \pseudogrid{g_1}{g_2}{N_0}{N_0}$.
\end{itemize}
\end{lemma}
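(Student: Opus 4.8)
Write $Q := \Gamma/\Delta$ and let $\pi \colon \Gamma \to Q$ be the quotient map. Since $S$ generates $\Gamma$ and $g_1,g_2 \in \Delta$, the image $\bar S := \p{S \setminus \Set{g_1,g_2}}^\Delta$ generates $Q$, so the Cayley graph $G\p{Q, \bar S}$ is connected. As $X$ is finite it meets only finitely many cosets of $\Delta$; let $C \subseteq Q$ be this finite set of cosets. I claim it suffices to produce a simple path $v_0 v_1 \cdots v_t$ in $G\p{Q,\bar S}$ with $C \subseteq \Set{v_0, \ldots, v_t}$. Indeed, writing each edge $v_j v_{j+1}$ as $v_{j+1} = v_j + \pi(h_{k_j})$ for some $h_{k_j} \in S \setminus \Set{g_1,g_2}$, I fix any representative $x_0 \in v_0$ and set $x_{j+1} := x_j + h_{k_j}$; then $x_j + \Delta = v_j$, the $x_j$ are distinct, and consecutive representatives differ by a single generator (the extra ``niceness'' used in the later steps). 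Finally, since every $x \in X$ lies in some $v_j = x_j + \Delta$, we may write $x - x_j \in \Delta$ as $n g_1 + m g_2$, and because $X$ is finite the finitely many resulting exponents are bounded; choosing $N_0$ larger than this bound yields $X \subseteq P + \pseudogrid{g_1}{g_2}{N_0}{N_0}$, as required.

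\textbf{The core claim.} It thus remains to show that in the connected Cayley graph of any finitely generated abelian group, every finite set of vertices lies on a common simple path. I would prove this by induction on the size of the (loop-free) generating set $T$. If $|T| = 1$ the group is cyclic and $G(Q,T)$ is a single cycle or double-ray, in which any finite set is contained in a contiguous arc. For the inductive step, pick $h \in T$, set $Q' := \langle T \setminus \Set{h}\rangle$, and note that $Q/Q'$ is cyclic, generated by the image of $h$. Its elements index the cosets $Q'_k$ of $Q'$, consecutive cosets being joined by the matching of $h$-edges $v \mapsto v+h$; and each coset carries a copy of the connected Cayley graph $G\p{Q', T \setminus \Set{h}}$, to which the induction hypothesis applies. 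Our finite set $C$ meets only cosets with indices in some interval $[a,b]$ (a non-wrapping arc, should $Q/Q'$ be finite cyclic), and I would thread simple paths covering $C \cap Q'_k$ for $k=a, \ldots, b$ together in a boustrophedon fashion, crossing from each coset to the next along one matching edge.

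\textbf{The main obstacle.} The delicate point is precisely this stitching: to concatenate the per-coset paths into one \emph{simple} path I must enter and leave each $Q'_k$ at prescribed vertices $s_k$ and $w_k$ with $w_k + h = s_{k+1}$, and cover $C \cap Q'_k$ by a simple $s_k$--$w_k$ path \emph{inside} the coset. In the base-case geometry such a path exists only when $s_k$ and $w_k$ bracket the relevant set, so I would route the snake through a common bounding box and turn at alternating extreme vertices, ensuring that each incoming bridge vertex is automatically extreme; propagating this endpoint control through the induction is the real content of the argument. Finally I record the trivial edge case $S = \Set{g_1,g_2}$, where $Q$ is trivial, $C = \Set{\Delta}$, and $P = \Set{x_0}$ works for any sufficiently large $N_0$.
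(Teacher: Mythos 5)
Your first paragraph (the reduction to finding a simple path in the quotient through the finitely many cosets meeting $X$, plus the choice of $N_0$) is sound and coincides with the paper's own first step. The gap is in what you call the core claim. You have reduced the lemma to the assertion that in a connected Cayley graph of a finitely generated abelian group every finite vertex set lies on a common simple path, and this assertion is exactly what your proposal does not prove: the inductive sketch defers the entire difficulty to the ``endpoint control'' you describe in your final paragraph, and then stops. The problem is structural, not just one of missing detail: the hypothesis you induct on (a covering path \emph{exists}) is too weak to carry the inductive step, because stitching the per-coset paths requires, inside each coset, a simple path covering $C \cap Q'_k$ with \emph{both endpoints prescribed} (entry $s_k$ and exit $w_k$ with $w_k + h = s_{k+1}$). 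That is a Hamilton-connectivity-type statement, strictly stronger than what you assume inductively, and it genuinely fails without additional hypotheses: if the coset graph is a double-ray, an $s_k$--$w_k$ path covers exactly the interval between its endpoints, and if it is a cycle, a covering $s_k$--$w_k$ path exists only when the set to be covered avoids one of the two arcs. So the strengthened induction hypothesis must also control where the bridge edges are allowed to land, and formulating and propagating that control through arbitrary finitely generated abelian quotients (which may mix finite cyclic and infinite factors) is precisely the real content you acknowledge but do not supply. As written, the proof does not close.

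For comparison, the paper avoids this entirely by quoting known Hamiltonicity results: by a theorem of Nash-Williams, every connected Cayley graph of a countably infinite abelian group has a Hamilton double-ray, and by a folklore result every connected Cayley graph of a finite abelian group has a Hamilton cycle. Applying this to the quotient $G\p{\Gamma/\Delta, \p{S \setminus \Set{g_1,g_2}}^\Delta}$ and taking the minimal segment (the ``convex hull'') of the Hamilton cycle or double-ray containing the finitely many cosets that meet $X$ immediately yields the required path $P^\Delta$, with $P$ the corresponding representatives. If you wish to keep your elementary, self-contained route, you would need to state and prove the strengthened per-coset statement with prescribed endpoints; that is a nontrivial result in its own right, and at that point citing the Hamiltonicity theorems is both shorter and safer.
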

\begin{proof}
Since $X$ is finite, there is a finite set $Y = \Set{y_1, \ldots, y_k} \subset \Gamma$ such that the cosets in $Y^\Delta=\Set{y_1 + \Delta, \ldots, y_k + \Delta}$ are all distinct and cover $X$. Moreover, since every $(y_\ell + \Delta ) \cap X$ is finite, there exists $N_0 \in \N$ such that 
$$(y_\ell + \pair{g_1,g_2}) \cap X = (y_\ell + \pseudogrid{g_1}{g_2}{N_0}{N_0} ) \cap X$$
for all $1 \leq \ell \leq k$. Then $X \subseteq Y + \pseudogrid{g_1}{g_2}{N_0}{N_0}$.

Next, by a result of Nash-Williams \cite{N59}, every Cayley graph of a countably infinite abelian group has a Hamilton double-ray, and it is a folklore result (see \cite{WG84}) that every Cayley graph of a finite abelian group has a Hamilton cycle. So in particular, the Cayley graph of $(\Gamma / \Delta, \p{S \setminus \Set{g_1,g_2}}^\Delta)$, has a Hamilton cycle / double-ray, say $H$. Let $P \supseteq Y$ be a finite set of representatives of the cosets of $\Delta$ which lie on the convex hull of $Y^\Delta$ on $H$. It is clear that $P$ is as required.
\end{proof}

\begin{itemize}
\item For the rest of this section let us fix $N_0 \in \mathbb{N}$ and $P = \Set{x_0, \ldots, x_t} \subset \Gamma$ to be as given by Lemma \ref{lem_coveringpath}.
\end{itemize}

\subsection{Picking sufficiently large grids}
\label{subsec_1.5}

In order to choose our grids large enough to be able to make all the necessary changes to our colouring, we will first need the following lemma, which guarantees that we can find, for each $k \neq 1,2$ and $x \in \Gamma$, many distinct standard $k$-double-rays which go between the cosets $x + \Delta$ and $(x + g_k) + \Delta$.

\begin{lemma}
\label{l_enoughuntamperedrays}
For any $g_k \in S \setminus \singleton{g_1,g_2}$ and any pair of distinct cosets $x+\Delta$ and $(g_k+x)+\Delta$, there are infinitely many distinct standard $k$-double-rays $R$ for the colouring $c$ with $E(R) \cap E(x+\Delta,(g_k+x)+\Delta) \neq \emptyset$.
\end{lemma}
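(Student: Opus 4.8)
The plan is to produce the required double-rays among the ``untampered'' translates $\doubleray{y}{g_k}$ whose base points $y$ lie in the coset $x + \Delta$, and to argue that all but finitely many of these are simultaneously distinct, crossing, and standard.

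First I would record the elementary crossing observation: for any $y \in x + \Delta$ the edge $(y, y + g_k)$ lies in $E(x+\Delta, (g_k+x)+\Delta)$, since $y \in x+\Delta$ forces $y + g_k \in (g_k + x) + \Delta$, and the two cosets are distinct by hypothesis. Consequently every $k$-double-ray whose vertex set $y + \langle g_k\rangle$ meets $x + \Delta$ contains such a crossing edge, located at whichever of its vertices falls in $x + \Delta$.

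Next I would count these double-rays. Two base points determine the same $k$-double-ray precisely when they differ by an element of $\langle g_k \rangle$, so the $k$-double-rays meeting $x+\Delta$ are in bijection with the image of $\Delta$ under the map $\delta \mapsto (x + \delta) + \langle g_k\rangle$ into $\Gamma/\langle g_k\rangle$; this image is in bijection with $\Delta / (\Delta \cap \langle g_k \rangle)$. Since $g_k$ has infinite order, $\Delta \cap \langle g_k\rangle$ is a subgroup of the infinite cyclic group $\langle g_k\rangle$ and hence has rank at most $1$, whereas $\Delta \cong \Z^2$ has rank $2$; therefore $\Delta / (\Delta \cap \langle g_k \rangle)$ is infinite. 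Thus there are infinitely many pairwise distinct $k$-double-rays that cross between the two cosets.

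Finally I would discard the tampered ones. Because $c$ is standard on $V(G)\setminus X$ and $G$ is locally finite, every edge of $G$ with both endpoints outside the finite set $X$ retains its standard colour $k$; hence any $k$-double-ray disjoint from $X$ is a standard $k$-double-ray. Since the $k$-double-rays partition $\Gamma$ into the cosets of $\langle g_k\rangle$, each vertex of $X$ lies on exactly one of them, so at most $\cardinality{X}$ of the crossing double-rays meet $X$. Removing these finitely many exceptions from the infinite family of the previous paragraph leaves infinitely many distinct standard $k$-double-rays crossing between $x+\Delta$ and $(g_k+x)+\Delta$, as required. No step presents a serious obstacle; the only point requiring care is the distinctness count, where one must rule out $\Delta \cap \langle g_k\rangle$ having finite index in $\Delta$, which is immediate from the rank comparison above.
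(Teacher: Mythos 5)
Your proof is correct, and it obtains the crucial infinite family by a genuinely different argument from the paper's. Both proofs share the same outer structure: exhibit infinitely many pairwise disjoint translates $\doubleray{y}{g_k}$ containing an edge of $E(x+\Delta,(g_k+x)+\Delta)$, then discard the finitely many that can meet the region where $c$ fails to be standard (the paper compresses this into the remark that ``it clearly suffices to prove the assertion for $c=c_{\textnormal{std}}$''; your last paragraph is exactly the justification of that remark). The difference is in how the infinite family is produced. The paper looks at just two one-parameter families, $\Set{\doubleray{x+mg_1}{g_k} \colon m\in\Z}$ and $\Set{\doubleray{x+mg_2}{g_k} \colon m\in\Z}$, and shows by contradiction that at least one of them consists of pairwise disjoint rays: if both failed, there would be non-zero integer relations $\ell g_1=\ell' g_k$ and $q g_2=q' g_k$, whence $q'\ell\, g_1=\ell' q\, g_2$, contradicting $\pair{g_1,g_2}\cong(\Z^2,+)$. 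You instead count \emph{all} translates meeting $x+\Delta$ at once, identifying them with $\Delta/(\Delta\cap\langle g_k\rangle)$ and deducing infinitude from the rank comparison $\operatorname{rank}(\Delta)=2>1\geq\operatorname{rank}(\Delta\cap\langle g_k\rangle)$. Your route is more structural and in a sense sharper: it describes exactly the set of crossing translates and isolates the single group-theoretic fact needed (a quotient of $\Z^2$ by a subgroup of rank at most one is infinite). The paper's route is more elementary, using nothing beyond integer relations among $g_1,g_2,g_k$, and it hands the disjoint rays over as an explicit arithmetic progression of translates. Note also that both versions deliver pairwise \emph{disjoint} rays, not merely distinct ones --- automatic here, since distinct translates $\doubleray{y}{g_k}$ have disjoint vertex sets (cosets of $\langle g_k\rangle$) --- which is the form in which the lemma is invoked in Section~\ref{subsec_1.5}.
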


\begin{proof}
It clearly suffices to prove the assertion for $c = c_\textnormal{std}$. We claim that either
\[\script{R}_1=\set{\doubleray{x+mg_1}{g_k}}:{m \in \Z} \; \text{ or } \; \script{R}_2=\set{\doubleray{x+mg_2}{g_k}}:{m \in \Z} \]
is such a collection of disjoint standard $k$-double-rays.

Suppose that $\script{R}_1$ is not a collection of disjoint double-rays. Then there are $m \neq m' \in \Z$ and $n,n' \in \Z$ such that 
$$m g_1 +  n  g_k = m' g_1 +  n'  g_k.$$
Since $g_k$ has infinite order, it follows that $n \neq n'$, too, and so we can conclude that there are $\ell, \ell' \in \Z \setminus \singleton{0}$ such that $\ell g_1 = \ell' g_k.$ Similarly, if $\script{R}_2$ was not a collection of disjoint double-rays, then we can find $q, q' \in \Z \setminus \singleton{0}$ such that $q g_2 = q' g_k.$ However, it now follows that 
$$q'\ell g_1 = q'(\ell' g_k) = \ell' (q'g_k) = \ell' q g_2,$$
contradicting the fact that $\langle g_1, g_2 \rangle \cong (\Z^2,+)$. This establishes the claim.

Finally, observe that if say $\script{R}_1$ is a disjoint collection, then for every $R_m = \doubleray{x+mg_1}{g_k} \in \script{R}_1$ we have $(x + m g_1,x + m g_1 + g_k) \in E(R_m) \cap E(x+\Delta,(g_k+x)+\Delta)$ as desired.
\end{proof}

We are now ready to define our numbers $N_0 < N_1 < N_2 < N_3$. Recall that $N_0$ and $P=\Set{x_0, \ldots, x_t}$ are given by Lemma \ref{lem_coveringpath}. For each $\ell \in [t]$, let $g_{n(\ell)}$ be some generator in $S \setminus \Set{g_1,g_2}$ that induces the edge between $x_{\ell-1} + \Delta$ and $x_\ell+ \Delta$ on the path $P^\Delta$. Note that $n(\ell) \in [s] \setminus \Set{1,2}$ for all $\ell$. 

By Lemma~\ref{l_enoughuntamperedrays}, we may find $t^2$ many disjoint standard double-rays
\[ \script{R} = \set{R^k_\ell}:{1 \leq k,\ell \leq t} \]
such that for every $\ell$, the double-rays in $\set{R^k_\ell = \doubleray{y^k_\ell}{g_{n(\ell)}}}:{k \in [t]}$ are standard $n(\ell)$-double-rays containing an edge
$$e^k_\ell=(y^k_\ell,y^k_\ell + g_{n(\ell)}) \in E(R^k_\ell) \cap E(x_{\ell-1}+\Delta, x_\ell + \Delta)$$
so that all
$T^k_\ell = \stdsquare{y^k_\ell}{g_i}{g_{n(\ell)}}$
are $(1,n(\ell))$-standard squares for $c$ which have empty intersection with $\Set{x_{\ell-1},x_\ell} + \pseudogrid{g_1}{g_2}{N_0}{N_0}$. Furthermore we may assume that these standard squares are all edge-disjoint. Then
\begin{itemize}
\item let $N_1 > N_0$ be sufficiently large such that the subgraph induced by $P + \pseudogrid{g_1}{g_2}{N_1-3}{N_1-3}$ contains all standard squares $T^k_\ell$ mentioned above. 
\item Let $N_2$ be arbitrary with $N_2 \geq 5N_1$.
\item Let $N_3$ be arbitrary with $N_3 \geq N_2 + 2N_1$.
\end{itemize}

\subsection{The cap-off step}
\label{subsec_2}

Our main tool for locally modifying our colouring is the following notion of `colour switchings', which is also used in \cite{L94}.

\begin{defn}[Colour switching of standard squares]
Given an edge colouring $c \colon E(G(\Gamma,S)) \to [s]$ and an $(i,j)$-standard square $\stdsquare{x}{g_i}{g_j}$,
a \emph{colour switching} on $\stdsquare{x}{g_i}{g_j}$ changes the colouring $c$ to the colouring $c'$ such that
\begin{itemize}
\item $c'=c$ on $E \setminus \stdsquare{x}{g_i}{g_j}$,
\item $c'\big((x,x+g_i)\big) = c'\big((x + g_j,x+ g_i + g_j)\big) = j$,
\item $c'\big((x,x+g_j)\big) = c'\big((x + g_i,x+ g_i + g_j)\big) = i$.
\end{itemize}
\end{defn}

It would be convenient if colour switchings maintained the property that a colouring is almost-standard. Indeed, if $c$ is standard on $E(G) \setminus F$ then $c'$ is standard on $E(G) \setminus (F \cup \stdsquare{x}{g_i}{g_j})$. Also, it is a simple check that if the $i$ and $j$-subgraphs of $G$ for $c$ are $2$-regular and spanning, then the same is true for $c'$. However, some $i$ or $j$-components may change from double-rays to finite cycles, and vice versa. 

\begin{step}[Cap-off step]
\label{step_co}
There is a colouring $c'$ obtained from $c$ by colour switchings of finitely many $(1,2)$-standard squares such that
\begin{itemize}
\item $c'=c$ on $E(G[X])$;
\item every $1$-component in $c'$ meeting $P + \pseudogrid{g_1}{g_2}{N_2}{N_1}$ is a finite cycle intersecting both $P + (\pseudogrid{g_1}{g_2}{N_3}{N_1} \setminus \pseudogrid{g_1}{g_2}{N_2}{N_1})$ and $P + \pseudogrid{g_1}{g_2}{N_1}{N_1}$;
\item every other $1$-component, and all other components of all other colour classes of $c'$ are double-rays;
\item $c'$ is standard outside of $P + \pseudogrid{g_1}{g_2}{N_3}{N_1}$ and inside of $P + (\pseudogrid{g_1}{g_2}{N_2}{N_1} \setminus \pseudogrid{g_1}{g_2}{N_0}{N_0})$;
\item  for each $x_\ell \in P$, the sets of vertices
\[
\Set{{x_l + n g_1 + mg_2}\colon{  N_1 \leq |n| \leq N_2 , m \in \{ N_1, N_1 -1\}}}
\]
are each contained in a single $1$-component of $c'$.
\end{itemize} 
\end{step}

\begin{proof}
For $\ell \in [t]$ and $q \in [N_1]$ let 
$R^\ell_q =\stdsquare{v^\ell_q}{g_1}{g_2}$ and $L^\ell_q =\stdsquare{w^\ell_q}{g_1}{g_2}$
be the $(1,2)$-squares with base point $v^\ell_q = x_\ell + (N_3 + 1 -2q) \cdot g_1 + (N_1 + 1 - 2q) \cdot g_2$ and $w^\ell_q = x_\ell - (N_3 + 2 -2q) \cdot g_1 + (N_1 + 1 - 2q) \cdot g_2$ respectively. The square $L^\ell_q$ is the mirror image of $R^\ell_q$ with respect to the $y$-axis of the grid $x_\ell + \pair{g_1,g_2}$, however the base points are not mirror images, accounting for the slight asymmetry in the definitions.

Since $N_3 \geq N_2 + 2N_1$, it follows that 
\[R^\ell_q \cup L^\ell_q \subseteq E(x_\ell + (\pseudogrid{g_1}{g_2}{N_3}{N_1} \setminus \pseudogrid{g_1}{g_2}{N_2}{N_1}))\]
for all $q \in [N_1]$, and so by assumption on $c$, all $R^\ell_q$ and $L^\ell_q$ are indeed standard $(1,2)$-squares.
We perform colour switchings on $R^\ell_q$ and $L^\ell_q$ for all $\ell \in [t]$ and $q \in [N_1]$, and call the resulting edge colouring $c'$. It is clear that $c'=c$ on $E(G[X])$ and that $c'$ is standard outside of $P + \pseudogrid{g_1}{g_2}{N_3}{N_1}$ and inside of $P + (\pseudogrid{g_1}{g_2}{N_2}{N_1} \setminus \pseudogrid{g_1}{g_2}{N_0}{N_0})$.
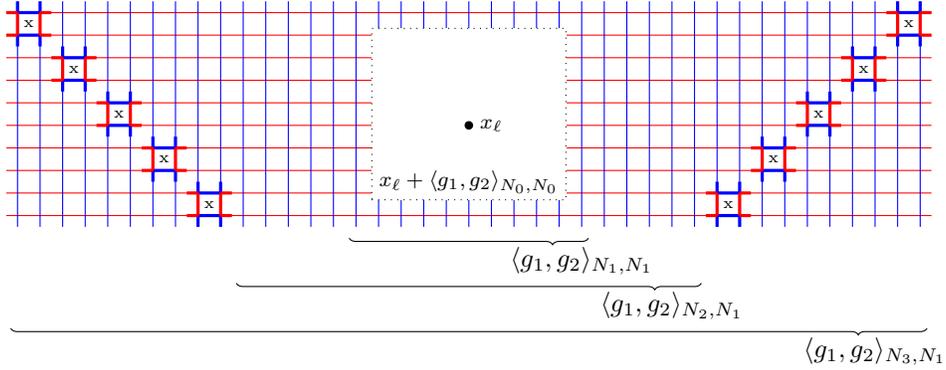
\begin{figure}[ht!]
\centering
\begin{tikzpicture}[scale=.3]

\def \NZero {4}
\def \NOne {5}	
\def \NThree {20} 

\pgfmathsetmacro\rows{2*\NOne} 
\foreach \n in {1,...,\rows}
{
\pgfmathsetmacro\y{\n-\NOne}
\draw[red, thin] (-\NThree-.5,\y) -- (\NThree+.5,\y);
}

\foreach \n in {-\NThree,...,\NThree}
{
\draw[blue, thin] (\n,-\NOne+.5) -- (\n,\NOne+.5);
}

\draw[mybrace=0.15] (\NOne+.3,-\NOne) -- (-\NOne-.3,-\NOne);
\node at (\NOne,-\NOne-1) {$\pseudogrid{g_1}{g_2}{N_1}{N_1}$};

\draw[mybrace=0.1] (\NThree-\rows+.3,-\NOne-2) -- (-\NThree+\rows-.3,-\NOne-2);
\node at (\NThree-\rows-1,-\NOne-3) {$\pseudogrid{g_1}{g_2}{N_2}{N_1}$};

\draw[mybrace=0.075] (\NThree+.3,-\NOne-4) -- (-\NThree-.3,-\NOne-4);
\node at (\NThree-2,-\NOne-5) {$\pseudogrid{g_1}{g_2}{N_3}{N_1}$};

\draw [fill=white, dotted] (-\NZero-.3,-\NZero+.7) rectangle (\NZero+.3,\NZero+.3);
\node at (0,-\NZero+1.5) {\footnotesize{$x_\ell + \pseudogrid{g_1}{g_2}{N_0}{N_0}$}};
\node[draw, circle,scale=.3, fill] (xl) at (0,0) {};
\node at (1,0) {\footnotesize{$x_\ell$}};

\foreach \q in {1,...,\NOne}
{
\pgfmathsetmacro\x{\NThree + 1 - 2*\q}
\pgfmathsetmacro\y{\NOne + 1 - 2*\q}
\node at (\x+.5,\y+.5) {$\textnormal{\tiny{x}}$};
\draw [blue,very thick] (\x,\y) -- (\x+1,\y);
\draw [blue,very thick] (\x,\y) -- (\x,\y-.5);
\draw [blue,very thick] (\x+1,\y) -- (\x+1,\y-.5);

\draw [blue,very thick] (\x,\y+1) -- (\x+1,\y+1);
\draw [blue,very thick] (\x,\y+1) -- (\x,\y+1.5);
\draw [blue,very thick] (\x+1,\y+1) -- (\x+1,\y+1.5);

\draw [red,very thick] (\x,\y) -- (\x,\y+1);
\draw [red,very thick] (\x,\y) -- (\x-.5,\y);
\draw [red,very thick] (\x,\y+1) -- (\x-.5,\y+1);

\draw [red,very thick] (\x+1,\y) -- (\x+1,\y+1);
\draw [red,very thick] (\x+1,\y) -- (\x+1.5,\y);
\draw [red,very thick] (\x+1,\y+1) -- (\x+1.5,\y+1);
}
\foreach \q in {1,...,\NOne}
{
\pgfmathsetmacro\x{-(\NThree + 1 - 2*\q)-1}
\pgfmathsetmacro\y{\NOne + 1 - 2*\q}
\node at (\x+.5,\y+.5) {$\textnormal{\tiny{x}}$};
\draw [blue,very thick] (\x,\y) -- (\x+1,\y);
\draw [blue,very thick] (\x,\y) -- (\x,\y-.5);
\draw [blue,very thick] (\x+1,\y) -- (\x+1,\y-.5);

\draw [blue,very thick] (\x,\y+1) -- (\x+1,\y+1);
\draw [blue,very thick] (\x,\y+1) -- (\x,\y+1.5);
\draw [blue,very thick] (\x+1,\y+1) -- (\x+1,\y+1.5);

\draw [red,very thick] (\x,\y) -- (\x,\y+1);
\draw [red,very thick] (\x,\y) -- (\x-.5,\y);
\draw [red,very thick] (\x,\y+1) -- (\x-.5,\y+1);

\draw [red,very thick] (\x+1,\y) -- (\x+1,\y+1);
\draw [red,very thick] (\x+1,\y) -- (\x+1.5,\y);
\draw [red,very thick] (\x+1,\y+1) -- (\x+1.5,\y+1);
}

\end{tikzpicture}
\caption{Performing colour switchings of standard squares at positions indicated by `x' in a  copy $x_\ell + \pseudogrid{g_1}{g_2}{N_3}{N_1}$ of a finite grid.}
\label{fig_capoff}
\end{figure}
Let $C \subset G$ denote the region consisting of all vertices that lie in $x_\ell + (\pseudogrid{g_1}{g_2}{N_3}{N_1}$ for some $\ell$ between a pair $L^\ell_q$ and $R^\ell_q$ for some $q$, i.e.
\[
C = \bigcup_{\ell=1}^{t}\bigcup_{q=1}^{N_1} \bigcup_{m=1}^2 \set{x_\ell + n g_1 +  (N_1 + m- 2q)  g_2}:{|n| \leq N_3 + 1 -2q}.
\]
Then $P + \pseudogrid{g_1}{g_2}{N_2}{N_1} \subseteq C$. By construction, there are no edges of colour $1$ in $c'$ leaving $C$, that is, $E(C,V(G) \setminus C) \cap c'^{-1}(1) = \emptyset$. In particular, since the $1$-subgraph of $G$ under $c'$ remains $2$-regular and spanning, as remarked above, all $1$-components under $c'$ inside $C$ are finite cycles, whose union covers $C$. 

Also, since each $1$-component of $c$ is a double-ray, it must leave the finite set $P + \pseudogrid{g_1}{g_2}{N_3}{N_1}$ and hence meets some $R_q^\ell$ or $L_q^\ell$. Therefore, by construction each $1$-component of $c'$ inside $C$ meets some $R_q^\ell$ or $L_q^\ell$ and so, since $c'$ is standard outside of $P + \pseudogrid{g_1}{g_2}{N_0}{N_0}$ except at the squares $R_q^\ell$ or $L_q^\ell$, each such $1$-component meets both $P + (\pseudogrid{g_1}{g_2}{N_3}{N_1} \setminus \pseudogrid{g_1}{g_2}{N_2}{N_1})$ and $P + \pseudogrid{g_1}{g_2}{N_1}{N_1}$.

Moreover, all other colour components remain double-rays. This is clear for all $k$-components of $G$ if $k \neq 1,2$ (as the colours switchings of $(1,2)$-standard squares did not affect these other colours). However, it is also clear for the $1$-coloured double-rays outside of $C$ and also for all $2$-coloured components, as we chose our standard squares $R_q^\ell$ and $L_q^\ell$ `staggered', so as not to create any finite monochromatic cycles, see Figure~\ref{fig_capoff} (recall that every $x_\ell + \Delta$ is isomorphic to the grid). 

Finally, since $N_1 > N_0$, the edge set
\begin{align*}
\{(x_\ell & + n g_1 + N_1 g_2, x_\ell + (n+1) g_1 + N_1 g_2)\colon -N_3 \leq |n| < N_3-1\} \\
& \cup \Set{(v^\ell_1,v^\ell_1 + g_2),((w^\ell_1+g_1,w^\ell_1 + g_1 + g_2))} \\
& \cup \Set{(x_\ell + n g_1 + (N_1-1) g_2, x_\ell + (n+1) g_1 + (N_1-1) g_2)}:{-N_3 \leq n < -N_1} \\
& \cup \Set{(x_\ell + n g_1 + (N_1-1) g_2, x_\ell + (n+1) g_1 + (N_1-1) g_2)}:{N_1 \leq n < N_3}
\end{align*}
meets only $R^\ell_1$ and $L^\ell_1$ and therefore is easily seen to be part of the same $1$-component of $c'$. In Figure~\ref{fig_capoff}, these edges correspond to the red line at the top, and the two lines below it on either side of $x_\ell + \pseudogrid{g_1}{g_2}{N_1}{N_1}$.
\end{proof}

\subsection{Combining cycles inside each coset of $\Delta$}
\label{subsec_3}

In the previous step we chose the $(1,2)$-standard squares at which we performed colour switchings in a staggered manner in the grids $x_l + \pseudogrid{g_1}{g_2}{N_3}{N_1}$, so that we could guarantee that all the $2$-components were still double-rays afterwards. In later steps we will no longer be able to be as explicit about which standard squares we perform colour switchings at, and so we will require the following definitions to be able to say when it is `safe' to perform a colour switching at a standard square.

\begin{defn}[Crossing edges]
Suppose $R = \set{ (v_i,v_{i+1})}:{i \in \mathbb{Z}}$ is a double-ray and $e_1 = (v_{j_1},v_{j_2})$ and $e_2 = (v_{k_1},v_{k_2})$ are edges with $j_1 < j_2$ and $k_1 < k_2$. We say that $e_1$ and $e_2$ \emph{cross} on $R$ if either $j_1 < k_1 < j_2 < k_2$ or $k_1<j_1<k_2<j_2$.
\end{defn}

\begin{lemma}
\label{l:crossingwithstandard}
For an edge-colouring $c \colon E(G(\Gamma,S)) \rightarrow [s]$, suppose that $\stdsquare{x}{g_i}{g_k}$ is an $(i,k)$-standard square with $g_i \neq g_k$, and further that the two $k$-coloured edges $(x,x+g_k)$ and $(x+g_i,x+g_i+g_k)$ of $\stdsquare{x}{g_i}{g_k}$ lie on the same \emph{standard $k$-double-ray} $R = \doubleray{x}{g_k}$. Then the two $i$-coloured edges of $\stdsquare{x}{g_i}{g_k}$ cross on $R$.
\end{lemma}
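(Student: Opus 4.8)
The plan is to set up coordinates and then argue purely combinatorially about the cyclic/linear order in which the relevant vertices occur along $R$.

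\medskip

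\noindent\textbf{Setting up coordinates.} First I would identify the vertices of the standard square $\stdsquare{x}{g_i}{g_k}$ explicitly as $x$, $x+g_i$, $x+g_k$, and $x+g_i+g_k$. The hypothesis is that the two $k$-coloured edges $(x,x+g_k)$ and $(x+g_i,x+g_i+g_k)$ both lie on the single standard double-ray $R = \doubleray{x}{g_k}$, which by definition consists of the edges $\set{(x+ng_k, x+(n+1)g_k)}:{n \in \Z}$. Parametrising $R$ as $v_n = x + n g_k$ (so that consecutive vertices differ by $g_k$), the edge $(x,x+g_k)$ sits between positions $v_0$ and $v_1$, i.e.\ ``at index $0$''. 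The crucial point is to locate the second $k$-edge: since $(x+g_i, x+g_i+g_k)$ lies on $R$, we must have $x + g_i = x + m g_k$ for some $m \in \Z$, equivalently $g_i = m g_k$. Then its endpoints are $v_m = x+g_i$ and $v_{m+1} = x+g_i+g_k$, so this edge sits ``at index $m$''.

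\medskip

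\noindent\textbf{Locating the $i$-edges and checking crossing.} The two $i$-coloured edges of the square are $e_1 = (x, x+g_i) = (v_0, v_m)$ and $e_2 = (x+g_k, x+g_i+g_k) = (v_1, v_{m+1})$, using $g_i = m g_k$. With the Definition of crossing in hand, I need the ordered endpoint indices. For $e_1$ these are $\{0, m\}$ and for $e_2$ these are $\{1, m+1\}$. I would first observe $m \neq 0$ (otherwise $g_i = 0$, contradicting $g_i \neq g_k$ and $g_i$ having infinite order), and indeed $m \neq 1$ and $m \neq -1$ can be disposed of quickly since $g_i = m g_k$ with $g_i, g_k$ distinct generators of infinite order forces $|m| \geq 2$ (if $m = \pm 1$ then $g_i = \pm g_k$, which is excluded). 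Now split into the two cases $m \geq 2$ and $m \leq -2$. In the case $m \geq 2$, the sorted endpoints of $e_1$ are $(0, m)$ and of $e_2$ are $(1, m+1)$, and one checks directly $0 < 1 < m < m+1$, which is exactly the pattern $j_1 < k_1 < j_2 < k_2$ with $j_1=0, j_2=m, k_1 = 1, k_2 = m+1$. In the case $m \leq -2$, the sorted endpoints of $e_1$ are $(m, 0)$ and of $e_2$ are $(m+1, 1)$, and one checks $m < m+1 < 0 < 1$, giving the other crossing pattern $k_1 < j_1 < k_2 < j_2$ after relabelling. Either way the two $i$-edges cross on $R$.

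\medskip

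\noindent\textbf{Main obstacle and bookkeeping.} The mathematical content is genuinely elementary once coordinates are fixed; the only thing requiring care is the index bookkeeping, i.e.\ correctly matching the abstract $(v_{j_1},v_{j_2})$, $(v_{k_1},v_{k_2})$ notation of the crossing definition against the concrete indices $\{0,m\}$ and $\{1,m+1\}$, and handling the sign of $m$ so that the inequalities come out in the exact form demanded by the definition. The one subtlety I would make sure to nail down is the justification that $g_i = m g_k$ with $|m| \geq 2$: this uses that $g_i \neq g_k$ and that both generators have infinite order, and it is exactly what rules out the degenerate indices where the ``square'' would collapse or the two $i$-edges would merely share an endpoint rather than cross. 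I expect no essential difficulty beyond this sign/index case analysis.
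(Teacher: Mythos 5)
Your proposal is correct and follows essentially the same route as the paper: both reduce to writing $g_i = m g_k$ with $m \in \Z\setminus\{-1,0,1\}$ and then reading off the crossing pattern from the linear order on $R$ in the two cases $m\geq 2$ and $m\leq -2$. The only (shared) glossed-over point is that excluding $m=-1$ really relies on $g_i \neq -g_k$, which is implicit in the square being a genuine $4$-cycle.
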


\begin{proof}
Write $e_1 = (x,x+g_i)$ and $e_2 = (x+g_k,x+g_k+g_i)$ for the two $i$-coloured edges of $\stdsquare{x}{g_i}{g_k}$. The assumption that $(x,x+g_k)$ and $(x+g_i,x+g_i+g_k)$ both lie on $\doubleray{x}{g_k}$ implies that $g_i = rg_k$ for some $r \in \Z \setminus \singleton{-1,0,1}$. If $r > 1$, we have $x < x + g_k < x+g_i < x+g_k+g_i$ (where $<$ denotes the natural linear order on the vertex set of the double-ray), and if $r < -1$, we have $x + g_i < x + g_k + g_i < x < x+g_k$, and so the edges $e_1$ and $e_2$ indeed cross on $R$.
\end{proof}

\begin{defn}[Safe standard square]
Given an edge colouring $c \colon E(G(\Gamma,S)) \to [s]$ we say an $(i,k)$-standard square $\stdsquare{x}{g_i}{g_k}$
is \emph{safe} if $g_i \neq -g_k$ and either
\begin{itemize}
\item the $k$-components for $c$ meeting $T$ are distinct double-rays, or
\item there is a unique $k$-component for $c$ meeting $T$, which is a double-ray on which $(x,x+g_i)$ and $(x+g_k,x+g_i+g_k)$  cross.
\end{itemize}
\end{defn}

The following lemma tells us, amongst other things, that if we perform a colour switching at a safe $(1,k)$-standard square then the $k$-components in the resulting colouring meeting that square will still be double-rays. 

\begin{lemma}\label{l:easyflip2}
Let $c \colon E(G(\Gamma,S)) \to [s]$ be an edge colouring, T = $\stdsquare{x}{g_i}{g_k}$
be an $(i,k)$-standard square with $g_i \neq -g_k$, and $c'$ be the colouring obtained by performing a colour switching on $T$. Suppose that the $i$ and $k$-components for $c$ meeting $T$ are all $2$-regular, and that there are two distinct $i$-components $C_1$ and $C_2$ meeting $T$, at least one of which is a finite cycle. Then the following statements are true:
\begin{itemize}
\item There is a single $i$-component for $c'$ meeting $T$ which covers $V(C_1) \cup V(C_2)$;
\item If the $k$-components for $c$ meeting $T$ are distinct double-rays then the $k$-components for $c'$ meeting $T$ are distinct double-rays;
\item If there is a unique $k$-component for $c$ meeting $T$, which is a double-ray on which $(x,x+g_i)$ and $(x+g_k,x+g_i+g_k)$ cross, then there is unique $k$-component for $c'$ meeting $T$, which is a double-ray.
\end{itemize}
\end{lemma}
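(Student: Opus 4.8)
My plan is to read the colour switching on $T=\stdsquare{x}{g_i}{g_k}$ as two elementary \emph{two-switches}, one in the $i$-subgraph and one in the $k$-subgraph. From the point of view of the $i$-subgraph the switch deletes the two old $i$-edges $e_1=(x,x+g_i)$ and $e_2=(x+g_k,x+g_i+g_k)$ and inserts the two new ones $f_1=(x,x+g_k)$ and $f_2=(x+g_i,x+g_i+g_k)$; dually, the $k$-subgraph loses $f_1,f_2$ and gains $e_1,e_2$. Since $g_i\neq g_k$ and $g_i\neq -g_k$, the four vertices $x,x+g_i,x+g_k,x+g_i+g_k$ are distinct and $T$ is a genuine $4$-cycle, so all four edges are genuine and distinct. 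The only tool I need is the trivial observation that deleting one edge from a $2$-regular component turns a finite cycle into a path (whose two ends are the endpoints of the deleted edge) and a double-ray into two rays (again ending at the endpoints of the deleted edge); reinserting edges then reconnects these pieces, and the whole argument is bookkeeping of which free ends get joined.

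For the first bullet I work in the $i$-subgraph. Deleting $e_1$ from $C_1$ and $e_2$ from $C_2$ leaves pieces whose free ends are exactly $x,x+g_i$ (from $C_1$) and $x+g_k,x+g_i+g_k$ (from $C_2$), and as $C_1\neq C_2$ these pieces are disjoint. Both inserted edges $f_1,f_2$ join a $C_1$-end to a $C_2$-end, so the switch is a \emph{crossing} reconnection. I then split on how many of $C_1,C_2$ are finite cycles. If both are cycles, deletion gives two paths which $f_1,f_2$ join at both ends into a single cycle; if exactly one is a cycle, deletion gives one path and two rays, which $f_1,f_2$ splice into a single double-ray. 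Either way there is a single $i$-component of $c'$ through $T$, covering $V(C_1)\cup V(C_2)$. This is exactly where the hypothesis that at least one of $C_1,C_2$ is a finite cycle is used: had both been double-rays, the same crossing reconnection would have produced \emph{two} double-rays.

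For the second and third bullets I work in the $k$-subgraph, where the roles of $\Set{e_1,e_2}$ and $\Set{f_1,f_2}$ are reversed. If the $k$-components $D_1\ni f_1$ and $D_2\ni f_2$ are distinct double-rays, deleting $f_1,f_2$ yields four rays ending at $x,x+g_k$ (from $D_1$) and $x+g_i,x+g_i+g_k$ (from $D_2$); inserting $e_1=(x,x+g_i)$ and $e_2=(x+g_k,x+g_i+g_k)$ joins the $x$-ray to the $x+g_i$-ray and the $x+g_k$-ray to the $x+g_i+g_k$-ray, giving two vertex-disjoint, hence distinct, double-rays. If instead a single $k$-double-ray $D$ contains both $f_1$ and $f_2$, then deleting these two edges splits $D$ into three pieces: two outer rays and a finite central path. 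The edges $f_1,f_2$ place the four vertices as two adjacent pairs along $D$, and the crossing of $e_1,e_2$ forces each inserted chord to join one of the outer rays to the central path (rather than joining the two outer rays to each other and closing the central path into a finite cycle). Hence the three pieces reconnect into a single double-ray.

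I expect the only real obstacle to be the third bullet: making the phrase ``$e_1$ and $e_2$ cross on $D$'' do its work. Concretely I must translate the crossing condition into a statement about the linear order of the four vertices along $D$ — i.e.\ that the chords $e_1,e_2$ link corresponding ends of the two adjacent pairs rather than opposite ends — and then read off that the reinserted edges thread the two infinite rays through the finite middle segment. This is the clean dichotomy ``crossing $\Rightarrow$ one double-ray / nested $\Rightarrow$ one double-ray plus one cycle'', and it is precisely the crossing hypothesis that rules out the cycle; everything else follows immediately from the cycle-becomes-path and double-ray-becomes-two-rays principle. A careful figure or an explicit ordering argument (reading $D$ in either direction) is where the case bookkeeping lives.
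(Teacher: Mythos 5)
Your proposal is correct and follows essentially the same route as the paper's proof: view the switch as deleting and reinserting two edges in each of the $i$- and $k$-subgraphs, track the free ends of the resulting paths/rays, and use the crossing hypothesis in the third case to see that the two reinserted chords thread the two outer rays through the middle segment rather than closing it into a cycle. The only difference is presentational — the paper spells out the two orderings of the four vertices along $D$ explicitly where you defer that bookkeeping to a figure — but the key claims you state are exactly the ones the paper verifies.
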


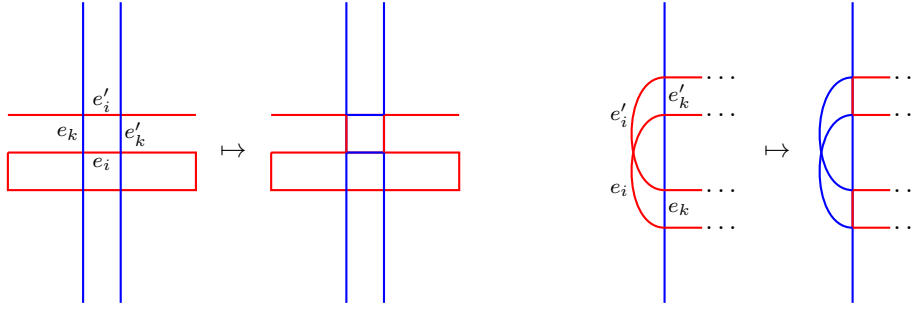
\begin{figure}[ht!]
\begin{center}

\begin{tikzpicture}[scale=0.5]

\draw[red,thick] (-2,0) -- (3,0) -- (3,-1) -- (-2,-1) -- (-2,0);
\draw[red,thick] (3,1) -- (-2,1);

\node at (.5,-.3) {\footnotesize{$e_i$}};
\node at (.5,1.4) {\footnotesize{$e'_i$}};
\node at (-.4,.5) {\footnotesize{$e_k$}};
\node at (1.4,.5) {\footnotesize{$e'_k$}};
  
\draw[blue,thick] (0,-4) -- (0,4);
\draw[blue,thick] (1,4) -- (1,-4);

\node  at (4,0) {$\mapsto$};

\begin{scope}[shift={(7,0)}]

\draw[red,thick] (-2,0) -- (3,0) -- (3,-1) -- (-2,-1) -- (-2,0);
\draw[red,thick] (3,1) -- (-2,1);
  
\draw[blue,thick] (0,-4) -- (0,4);
\draw[blue,thick] (1,4) -- (1,-4);
\draw[blue,thick] (0,0)--(1,0);
\draw[blue,thick] (0,1)--(1,1);
\draw[red, thick] (0,0)--(0,1);
\draw[red,thick] (1,0)--(1,1);

\end{scope}
\end{tikzpicture} \qquad \qquad \quad
\begin{tikzpicture}[scale=0.5]

\draw[blue,thick] (0,-4) -- (0,4);

\node at (.4,1.5) {\footnotesize{$e'_k$}};
\node at (.4,-1.5) {\footnotesize{$e_k$}};

\node at (-1.2,1) {\footnotesize{$e'_i$}};
\node at (-1.2,-1) {\footnotesize{$e_i$}};

\draw[red,thick] (0,-2)--(1,-2) (0,-1)--(1,-1) (0,1)--(1,1)  (0,2)--(1,2) ;
\node  at (1.5,-2) {$\ldots$};
\node  at (1.5,-1) {$\ldots$};
\node  at (1.5,1) {$\ldots$};
\node  at (1.5,2) {$\ldots$};

\draw[red,thick]    (0,-2) to[out=180,in=180] (0,1);
\draw[red,thick]    (0,-1) to[out=180,in=180] (0,2);

\node  at ( 3,0) {$\mapsto$};

\begin{scope}[shift={(5,0)}]
\draw[blue,thick] (0,-4) -- (0,4);

\draw[red,thick] (0,-2)--(1,-2) (0,-1)--(1,-1) (0,1)--(1,1)  (0,2)--(1,2) ;
\node  at (1.5,-2) {$\ldots$};
\node  at (1.5,-1) {$\ldots$};
\node  at (1.5,1) {$\ldots$};
\node  at (1.5,2) {$\ldots$};

\draw[red,thick] (0,-2)--(0,-1) (0,1)--(0,2);

\draw[blue,thick]    (0,-2) to[out=180,in=180] (0,1);
\draw[blue,thick]    (0,-1) to[out=180,in=180] (0,2);
\end{scope}
\end{tikzpicture}
\end{center}
\caption{The two situations in Lemma~\ref{l:easyflip2} with $i$ in red and $k$ in blue.}
\label{fig_colourflip}
\end{figure}

\begin{proof}

Let us write $e_i=(x,x+g_i)$, $e_k=(x,x+g_k)$, $e'_i=(x+g_k,x + g_i + g_k)$ and $e'_k=(x+g_i,x + g_i + g_k)$, so that $\stdsquare{x}{g_i}{g_j} = \Set{e_i,e_k,e'_i,e'_k}$.

For the first item, let the $i$-components for $c$ be $e_i \in C_1$ and $e'_i\in C_2$, where without loss of generality $C_2$ is a finite cycle. Then $C_2 - e'_i$ is a finite path, and $C_1 - e_i$ has at most $2$ components, one containing $x$ and one containing $x + g_i$. Hence, the $i$-component for $c'$ meeting $T$, $(C_1 \cup C_2) - \Set{e_i,e'_i} +  \Set{e_k,e'_k}$, is connected and covers $V(C_1) \cup V(C_2)$.

For the second item, let the $k$-components for $c$ be $e_k \in D_1$ and $e'_k\in D_2$. Then $D_1 - e_k$ has two components, a ray starting at $x$ and a ray starting at $x+g_k$. Similarly, $D_2 - e'_k$ has two components, a ray starting at $x+g_i$ and a ray starting at $x + g_i + g_k$. Hence, the $k$-components for $c'$ meeting $T$, which are the components of $(D_1 \cup D_2) -  \Set{e_k,e'_k} +  \Set{e_i,e'_i} $, are distinct double-rays.

Finally, if there is a single $k$-component $D$ for $c$ meeting $T$ such that $D$ is a double-ray, then $D - \Set{e_k,e'_k}$ consist of three components. Since $e_i$ and $e'_i$ cross on $D$ there are two cases as to what these components are. Either the components consist of two rays, starting at $x$ and $x + g_i + g_k$ and a finite path from $x+g_k$ to $x+g_i$, or the components consist of two rays, starting at $x+g_i$ and $x+g_k$, and a finite path from $x+g_i+g_k$ to $x$. In either case, the $k$-component for $c'$ meeting $T$, namely $D -  \Set{e_k,e'_k} +  \Set{e_i,e'_i} $, is a double-ray.
\end{proof}

Lemma \ref{l:easyflip2} is also useful as the first item allows us to use $(1,k)$ colour switchings to combine two $1$-components into a single $1$-component which covers the same vertex set.

\begin{step}[Combining cycles step]
\label{step_cc}
We can change $c'$ from Step~\ref{step_co} via colour switchings of finitely many $(1,2)$-standard squares to a colouring $c''$ satisfying
\begin{itemize}
\item $c''=c'=c$ on $E(G[X])$;
\item every $1$-component in $c''$ meeting $P + \pseudogrid{g_1}{g_2}{N_2}{N_1}$ is a finite cycle intersecting both $P + (\pseudogrid{g_1}{g_2}{N_3}{N_1} \setminus \pseudogrid{g_1}{g_2}{N_2}{N_1})$ and $P + \pseudogrid{g_1}{g_2}{N_1}{N_1}$;
\item every other $1$-component, and all other components of all other colour classes of $c''$ are double-rays;
\item every $1$-component in $c''$ meeting some $x_k + (\pseudogrid{g_1}{g_2}{N_2}{N_1} \setminus \pseudogrid{g_1}{g_2}{N_0}{N_0}) $ covers $x_k + (\pseudogrid{g_1}{g_2}{N_2}{N_1} \setminus \pseudogrid{g_1}{g_2}{N_0}{N_0})$;
\item $c''$ is standard outside of $P + \pseudogrid{g_1}{g_2}{N_3}{N_1}$ and inside of $P + (\pseudogrid{g_1}{g_2}{N_1}{N_1} \setminus \pseudogrid{g_1}{g_2}{N_0}{N_0})$.
\end{itemize}
\end{step}
\begin{proof}
Our plan will be to go through the `grids' $x_k + \pseudogrid{g_1}{g_2}{N_2}{N_1}$ in order, from $k=0$ to $t$, and use colour switchings to combine all the $1$-components which meet $x_k + (\pseudogrid{g_1}{g_2}{N_2}{N_1} \setminus \pseudogrid{g_1}{g_2}{N_0}{N_0}) $ into a single $1$-component. We note that, since $c'$ is not standard on $X$, it may be the case that these $1$-components also meet $x_{k'} + \pseudogrid{g_1}{g_2}{N_2}{N_1}$ for $k' \neq k$.

We claim inductively that there exists a sequence of colourings $c' = c_0,c_1, \ldots, c_t = c''$ such that for each $0 \leq \ell \leq t$:

\begin{itemize}
\item $c_\ell=c'=c$ on $E(G[X])$;
\item every $1$-component in $c_\ell$ meeting $P + \pseudogrid{g_1}{g_2}{N_2}{N_1}$ is a finite cycle intersecting both $P + (\pseudogrid{g_1}{g_2}{N_3}{N_1} \setminus \pseudogrid{g_1}{g_2}{N_2}{N_1})$ and $P + \pseudogrid{g_1}{g_2}{N_1}{N_1}$;
\item for every $k \leq \ell$, every $1$-component in $c_\ell$ meeting $x_k + (\pseudogrid{g_1}{g_2}{N_2}{N_1} \setminus \pseudogrid{g_1}{g_2}{N_0}{N_0})$ covers $x_k + (\pseudogrid{g_1}{g_2}{N_2}{N_1} \setminus \pseudogrid{g_1}{g_2}{N_0}{N_0})$;
\item for every $k > \ell$, $c_\ell=c'$ on $x_k +\pseudogrid{g_1}{g_2}{N_2}{N_1}$
\item every other $1$-component, and all other components of all other colour classes of $c_\ell$ are double-rays;
\item $c_\ell$ is standard outside of $P + \pseudogrid{g_1}{g_2}{N_3}{N_1}$ and inside of $P + (\pseudogrid{g_1}{g_2}{N_1}{N_1} \setminus \pseudogrid{g_1}{g_2}{N_0}{N_0})$.
\end{itemize}

In Step~\ref{step_co} we constructed $c_0=c'$ such that this holds. Suppose that $0 < \ell \leq t$, and that we have already constructed $c_k$ for $k < \ell$.
 
For $q  \in [4N_1 - 2]$ we define $T_q = \stdsquare{v_q}{g_1}{g_2}$
to be the $(1,2)$-square with base point 
\[v_q = \begin{cases} x_\ell + (N_2 +2 - 2q)g_1 + (N_1 - q)g_2 & \text{ if } \; q \leq 2N_1 - 1, \; \text{ and} \\
					x_\ell - (N_2 + 3- 2q')g_1 + (N_1 - q')g_2 & \text{ if } \; q'= q-(2N_1-1) \geq 1.
\end{cases}
\]
With these definitions, $T_{2N_1-1+q}$ is the mirror image of $T_q$ for all $q \in [2N_1-1]$ along the $y$-axis. Moreover, since $N_2 \geq 5N_1$, each $T_q$ is contained within $x_k + (\pseudogrid{g_1}{g_2}{N_2}{N_1} \setminus \pseudogrid{g_1}{g_2}{N_1}{N_1})$.

We will combine the $1$-components in $c_{\ell-1}$ which meet $x_\ell + (\pseudogrid{g_1}{g_2}{N_2}{N_1} \setminus \pseudogrid{g_1}{g_2}{N_0}{N_0}) $ into a single component by performing colour switchings at some of the $(1,2)$-squares $T_q$. Let us show first that most of the induction hypotheses are maintained regardless of the subset of the $T_q$ we make switchings at.

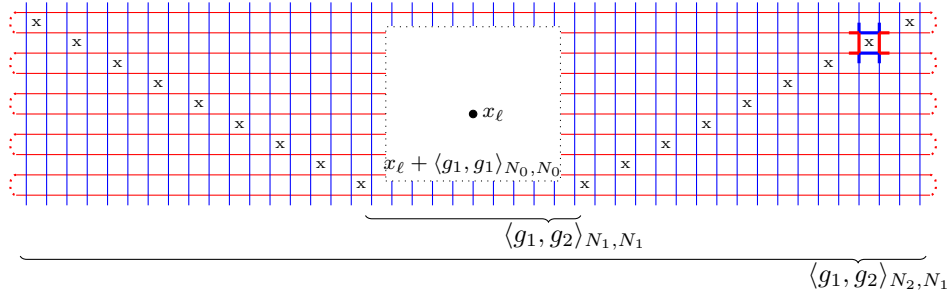
\begin{figure}[ht!]
\centering
\begin{tikzpicture}[scale=.27]

\def \NZero {4}
\def \NOne {5}
\def \NThree {22} 

\pgfmathsetmacro\rows{2*\NOne} 
\foreach \n in {1,...,\rows}
{
\pgfmathsetmacro\y{\n-\NOne}
\draw[red, thin] (-\NThree-.5,\y) -- (\NThree+.5,\y);
}

\foreach \n in {-\NThree,...,\NThree}
{
\draw[blue, thin] (\n,-\NOne+.5) -- (\n,\NOne+.5);
}

\draw[mybrace=0.15] (\NOne+.3,-\NOne) -- (-\NOne-.3,-\NOne);
\node at (\NOne,-\NOne-1) {$\pseudogrid{g_1}{g_2}{N_1}{N_1}$};

\draw[mybrace=0.075] (\NThree+.3,-\NOne-2) -- (-\NThree-.3,-\NOne-2);
\node at (\NThree-2,-\NOne-3) {$\pseudogrid{g_1}{g_2}{N_2}{N_1}$};

\draw [fill=white, dotted] (-\NZero-.3,-\NZero+.7) rectangle (\NZero+.3,\NZero+.3);
\node at (0,-\NZero+1.5) {\footnotesize{$x_\ell + \pseudogrid{g_1}{g_1}{N_0}{N_0}$}};
\node[draw, circle,scale=.3, fill] (xl) at (0,0) {};
\node at (1,0) {\footnotesize{$x_\ell$}};

\foreach \n in {1,...,\NOne}
{
\draw[thick, red, dotted] (-\NThree-.5, 7 - 2*\n) to[out=180,in=180] (-\NThree-.5,6 - 2*\n);
\draw[thick, red, dotted] (\NThree+.5, 7 - 2*\n) to[out=0,in=0] (\NThree+.5,6 - 2*\n);
}
\pgfmathsetmacro\rows{\NOne-1}
\pgfmathsetmacro\rowsupper{\NOne-1}
\foreach \q in {-\rows,...,\rowsupper}
{
\pgfmathsetmacro\x{\NThree - \NOne + 2*\q - 4}
\pgfmathsetmacro\y{ \q }
\node at (\x+.5,\y+.5) {$\textnormal{\tiny{x}}$};
}
\foreach \q in {-\rows,...,\rowsupper}
{
\pgfmathsetmacro\x{-(\NThree - \NOne + 2*\q)+3}
\pgfmathsetmacro\y{-\NOne + 5 + \q}
\node at (\x+.5,\y+.5) {$\textnormal{\tiny{x}}$};
}

\def \NThree {20} 
\draw [blue,very thick] (\NThree,\NOne-1) -- (\NThree -1,\NOne-1);
\draw [blue,very thick] (\NThree,\NOne-1) -- (\NThree,\NOne-0.5);
\draw [blue,very thick] (\NThree -1,\NOne-1) -- (\NThree -1,\NOne-0.5);

\draw [blue,very thick] (\NThree,\NOne-2) -- (\NThree -1,\NOne-2);
\draw [blue,very thick] (\NThree,\NOne-2) -- (\NThree,\NOne-2.5);
\draw [blue,very thick] (\NThree -1,\NOne-2) -- (\NThree -1,\NOne-2.5);

\draw [red,very thick] (\NThree,\NOne-1) -- (\NThree,\NOne-2);
\draw [red,very thick] (\NThree,\NOne-1) -- (\NThree+.5,\NOne-1);
\draw [red,very thick] (\NThree,\NOne-2) -- (\NThree+.5,\NOne-2);

\draw [red,very thick] (\NThree -1,\NOne-1) -- (\NThree -1,\NOne-2);
\draw [red,very thick] (\NThree -1,\NOne-1) -- (\NThree -1.5,\NOne-1);
\draw [red,very thick] (\NThree -1,\NOne-2) -- (\NThree -1.5,\NOne-2);

\end{tikzpicture}
\caption{The standard squares $T_q$, with a colour switching performed at $T_{2}$.}
\label{fig_comcyc}
\end{figure}

We note that, since $c_{\ell-1}$ is standard inside of $x_\ell + (\pseudogrid{g_1}{g_1}{N_2}{N_1} \setminus \pseudogrid{g_1}{g_2}{N_0}{N_0})$ and outside of $P + \pseudogrid{g_1}{g_2}{N_3}{N_1}$, and $g_1 \neq -g_2$, each $T_q$ is a safe $(1,2)$-standard square for $c_{\ell-1}$. Furthermore, by construction, even if we perform colour switchings at any subset of the $T_q$, the remaining squares remain standard and safe.

Hence, by Lemma \ref{l:easyflip2} and the induction assumption, after performing colour switchings at any subset of the standard squares $T_q$ all $2$-components of the resulting colouring will be double-rays. Secondly, these colour switching will not change the colouring outside of $P + \pseudogrid{g_1}{g_2}{N_2}{N_1}$ and inside of $P + \pseudogrid{g_1}{g_2}{N_1}{N_1}$, or in any $x_k + \pseudogrid{g_1}{g_2}{N_2}{N_1}$ with $k \neq \ell$. In particular, every $1$-component not meeting $P + \pseudogrid{g_1}{g_2}{N_2}{N_1}$ will still be a double-ray. Finally, again by Lemma \ref{l:easyflip2}, every $1$-component of the resulting colouring meeting $P + \pseudogrid{g_1}{g_2}{N_2}{N_1}$ will be a finite cycle which covers the vertex set of some union of $1$-components in $c_{\ell-1}$, and hence will intersect both $P + (\pseudogrid{g_1}{g_2}{N_3}{N_1} \setminus \pseudogrid{g_1}{g_2}{N_2}{N_1})$ and $P + \pseudogrid{g_1}{g_2}{N_1}{N_1}$.

Let us write $e_q = (v_q,v_q + g_1)$ for each $q \in [4N_1-2]$.
Since $c_{\ell-1} = c'$ on $x_\ell +\pseudogrid{g_1}{g_2}{N_2}{N_1}$, and by Step~\ref{step_co} $c'$ is standard on $x_\ell + (\pseudogrid{g_1}{g_2}{N_2}{N_1} \setminus \pseudogrid{g_1}{g_2}{N_0}{N_0})$, each $1$-component of $c_{\ell-1}$ that meets $x_\ell + (\pseudogrid{g_1}{g_2}{N_2}{N_1} \setminus \pseudogrid{g_1}{g_2}{N_0}{N_0})$ contains at least one $e_q$. Also, $e_1$ and $e_{2N_1}$ belong to the same $1$-component by the last claim in Step~\ref{step_co}. Let us write $\script{C}$ for the collection of such cycles, and consider the map
$$\alpha \colon \script{C} \to \Set{1, \ldots, 4N_1-1}, \; C \mapsto \min \set{q}:{e_q \in E(C)},$$
which maps each cycle to the first $e_q$ that it contains. Since $\script{C}$ is a disjoint collection of cycles, the map $\alpha$ is injective. Now let $c_\ell$ be the colouring obtained from $c_{\ell-1}$ by switching all standard squares in
$$\script{T}=\set{T_q}:{q \in \operatorname{ran}(\alpha)} \setminus \{T_1\}.$$

We claim that $c_\ell$ satisfies our induction hypothesis for $\ell$. By the previous comments it will be sufficient to show

\begin{claim}
Every $1$-component in $c_\ell$ meeting $x_\ell + (\pseudogrid{g_1}{g_2}{N_2}{N_1} \setminus \pseudogrid{g_1}{g_2}{N_0}{N_0})$ covers $x_\ell + (\pseudogrid{g_1}{g_2}{N_2}{N_1} \setminus \pseudogrid{g_1}{g_2}{N_0}{N_0}) $.
\end{claim}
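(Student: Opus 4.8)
The plan is to show that the colour switchings at the squares in $\script{T}$ fuse \emph{all} of the cycles in $\script{C}$ into a single $1$-component, and then read off the claim. Since the $1$-subgraph is spanning, every vertex of the annulus $x_\ell + (\pseudogrid{g_1}{g_2}{N_2}{N_1} \setminus \pseudogrid{g_1}{g_2}{N_0}{N_0})$ lies on a $1$-component meeting this set, hence on some cycle of $\script{C}$; thus $\bigcup_{C \in \script{C}} V(C)$ already covers the annulus. So once we know that a \emph{single} $1$-component $D$ of $c_\ell$ contains $\bigcup_{C \in \script{C}} V(C)$, the claim follows: $D$ covers the annulus, and any other $1$-component meeting the annulus would share a vertex with $D$ and hence equal $D$, making $D$ the unique $1$-component meeting the annulus. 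It therefore suffices to prove that the switches connect all cycles of $\script{C}$.

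For the bookkeeping I would write $D_q$ for the $1$-component of $c_{\ell-1}$ containing $e_q$; by the previous paragraph each $D_q$ lies in $\script{C}$ and $\alpha(D_q) \le q$. The key local fact is the effect of one switch. The square $T_q = \stdsquare{v_q}{g_1}{g_2}$ has $1$-coloured edges $e_q = (v_q, v_q + g_1)$ and $f_q := (v_q + g_2, v_q + g_1 + g_2)$, the latter sitting one row above $e_q$ at the same $g_1$-coordinate in the grid $x_\ell + \pair{g_1,g_2}$. A short computation shows that for $2 \le q \le 4N_1-2$ with $q \neq 2N_1$ the edges $f_q$ and $e_{q-1}$ lie in the same row and on the same side of the central hole, separated only by standard $1$-coloured edges, so they lie on the same $1$-component of $c_{\ell-1}$, namely $D_{q-1}$. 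Hence, by the first item of Lemma~\ref{l:easyflip2}, the switch at such a $T_q$ merges $D_q$ and $D_{q-1}$ into one $1$-component covering $V(D_q) \cup V(D_{q-1})$; and by the earlier observations in this step all $1$-components meeting the grid remain finite cycles throughout, so the hypotheses of Lemma~\ref{l:easyflip2} are satisfied whenever $D_q \neq D_{q-1}$.

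With this I would set up an auxiliary \emph{merge graph} $\script{A}$ on the vertex set $\script{C}$, joining, for each $q \in \operatorname{ran}(\alpha) \setminus \{1\}$, the cycle $C$ with $\alpha(C) = q$ (which is $D_q$) to $D_{q-1}$, and argue that $\script{A}$ is connected. Order $\script{C}$ as $C_1, \ldots, C_r$ with $1 = \alpha(C_1) < \alpha(C_2) < \cdots < \alpha(C_r)$, using that $\alpha$ is injective and that $e_1$ forces $\alpha(C_1)=1$. For $j \ge 2$ set $q_j = \alpha(C_j) \ge 2$; then $T_{q_j} \in \script{T}$ and the corresponding edge of $\script{A}$ joins $C_j = D_{q_j}$ to $D_{q_j - 1}$, whose $\alpha$-value is at most $q_j - 1 < q_j$, i.e.\ to some earlier $C_{j'}$. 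Thus every non-root vertex of $\script{A}$ has a neighbour preceding it, so $\script{A}$ is a tree rooted at $C_1$, hence connected. Processing the switches in increasing order of $q$ and using that $\script{A}$ is acyclic then guarantees that at each switch the two $1$-components meeting the square are genuinely distinct finite cycles, so Lemma~\ref{l:easyflip2} applies at every step and the connected components of $\script{A}$ are exactly the $1$-components of $c_\ell$ meeting the annulus; connectedness of $\script{A}$ yields a single such component, which covers $\bigcup_{C \in \script{C}} V(C)$ and hence the annulus.

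The step I expect to be the main obstacle is the left/right \emph{seam} at $q = 2N_1$, where $e_q$ and $e_{q-1}$ sit on opposite sides of the hole, so the identification $f_q \in D_{q-1}$ fails. This is precisely where the last bullet of Step~\ref{step_co} is needed: since $e_1$ and $e_{2N_1}$ lie on a common $1$-component of $c' = c_{\ell-1}$, we have $D_{2N_1} = D_1 = C_1$, so $\alpha(D_{2N_1}) = 1$ and therefore $2N_1 \notin \operatorname{ran}(\alpha)$, meaning no switch is ever performed at the degenerate square $T_{2N_1}$. The very same coincidence $D_1 = D_{2N_1}$ is what bridges the two staircases: any cycle supported only to the left of the hole connects, following parent pointers down to the smallest left-hand $\alpha$-value, to $D_{2N_1} = C_1$ and hence to the root. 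Getting this seam bookkeeping right — correctly matching $f_q$ with $D_{q-1}$ on each side and excluding the switch at $T_{2N_1}$ — is the delicate point; the rest is the routine tree argument above together with the coverage statement of Lemma~\ref{l:easyflip2}.
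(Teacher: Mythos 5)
Your proposal is correct and follows essentially the same argument as the paper: the key facts that the opposite edge of $T_q$ lies on the component $D_{q-1}$ of $e_{q-1}$, that the seam at $q=2N_1$ is neutralised because $e_1$ and $e_{2N_1}$ already share a component (so $2N_1\notin\operatorname{ran}(\alpha)$), and the application of Lemma~\ref{l:easyflip2} to merge cycles are all exactly the paper's steps. The only difference is organisational: the paper runs a linear induction in increasing $\alpha$-order maintaining one growing component covering $\bigcup_{y\le z}C_y$, whereas you package the same merges as a rooted tree on $\script{C}$ and invoke its connectedness.
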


To see this, we index $\script{C} = \Set{C_1, \ldots, C_r}$ such that $u< v$ implies $\alpha(C_u) < \alpha(C_v)$, and consider the sequence of colourings $\set{c^z}:{ z\in [r]}$ where $c^1 = c_\ell$ and each $c^z$ is obtained from $c^{z-1}$ by switching the standard square $T_{\alpha(C_z)}$.

Let us show by induction that for every $z \in [r]$ there is an $1$-component of $c^z$ which covers $\bigcup_{y \leq z} C_y$. For $z=1$ the claim is clearly true. So, suppose $z > 1$. Since $\alpha(C_z)$ is minimal in $\{ \alpha(C_y) \colon y \geq z \}$ it follows that $e_q \in \bigcup_{y < z} C_y$ for every $q < \alpha(C_z)$. Note that, since $c_{\ell-1} = c'$ on $x_\ell + \pseudogrid{g_1}{g_2}{N_2}{N_1}$, it follows from the final claim in the Cap-off step that $C_1$ contains both $e_1$ and $e_{2N_1}$, and so $\alpha(C_z) \neq 2N_1$.

Consider the standard square $T_{\alpha(C_z)}$. Since $c_{\ell-1} = c'$ on $x_\ell + \pseudogrid{g_1}{g_2}{N_2}{N_1}$, by construction the edge `opposite' to $e_{\alpha(C_z)}$ in $T_{\alpha(C_z)}$, that is, $e_{\alpha(C_z)} + g_j$, is in the same $1$-component in $c_{\ell-1}$ as $e_{\alpha(C_z) - 1}$, and hence is contained in $\bigcup_{y < z} C_y$.

Therefore, by Lemma \ref{l:easyflip2}, after performing an $(1,2)$-colour switching at $T_{\alpha(C_z)}$, the $1$-component in $c^z$ contains $\bigcup_{y \leq z} C_y$.

Hence, there is an $1$-component of $c_\ell = c^r$ which covers $\bigcup_{y \leq r} C_y$, and so there is a unique $1$-component of $c_\ell$ meeting $x_\ell + (\pseudogrid{g_1}{g_2}{N_2}{N_1} \setminus \pseudogrid{g_1}{g_2}{N_0}{N_0}) $ which covers it, establishing the claim.
\end{proof}

\subsection{Combining cycles across different cosets of $\Delta$}
\label{subsec_4}
In the third and final step we join the finite cycles covering each $x_\ell + (\pseudogrid{g_1}{g_2}{N_1}{N_1} \setminus \pseudogrid{g_1}{g_2}{N_0}{N_0})$ into a single finite cycle, and then make one final switch to absorb this cycle into a double-ray. The resulting colouring will then satisfy the conditions of Lemma \ref{lem_mainlemma}.

\begin{step}[Combining cosets step]
\label{step_mainlemma_quant}
We can change $c''$ from the previous lemma to an almost-standard colouring $\hat{c}$ such that 
\begin{itemize}
\item $\hat{c}=c''=c'=c$ on $E(G[X])$;
\item Some component in colour $1$ covers $P + \pseudogrid{g_1}{g_2}{N_1}{N_1}$.
\end{itemize}
\end{step}

\begin{proof}
Recall that $P = \Set{x_0, \ldots, x_t}$ is such that $P^\Delta = \Set{x_0 + \Delta, \ldots, x_t + \Delta}$ is a finite, graph-theoretic path in the Cayley graph of the quotient $\Gamma / \Delta$ with generating set $S \setminus \{g_1,g_2\}$. Moreover, recall from Section~\ref{subsec_1.5} that $N_1 > N_0$ was chosen so that for the initial colouring $c$ there were $t^2$ many disjoint standard double-rays
\[ \script{R} = \set{R^k_\ell}:{1 \leq k,\ell \leq t} \]
such that for every $\ell$, the double-rays in $\set{R^k_\ell = \doubleray{y^k_\ell}{g_{n(\ell)}}}:{k \in [t]}$ are standard $n(\ell)$-double-rays containing an edge
$$e^k_\ell=(y^k_\ell,y^k_\ell + g_{n(\ell)}) \in E(R^k_\ell) \cap E(x_{\ell-1}+\Delta, x_\ell + \Delta)$$
so that all
$T^k_\ell = \stdsquare{y^k_\ell}{g_1}{g_{n(\ell)}}$
are edge-disjoint $(1,n(\ell))$-standard squares for the colouring $c$ contained in the subgraph induced by $P + \pseudogrid{g_1}{g_2}{N_1-3}{N_1-3}$ which have empty intersection with $\Set{x_{\ell-1},x_\ell} + \pseudogrid{g_1}{g_2}{N_0}{N_0}$. However, since we only altered the $(1,2)$-subgraphs of $G$ in Step~\ref{step_co} and \ref{step_cc}, it is clear that all these standard double-rays and standard squares for $c$ remain standard also for the colourings $c'$ and in particular $c''$.

\begin{figure}[ht!]
\centering
\begin{tikzpicture}[scale=.2]

\def \NZero {6}
\def \NOne {10}	
\def \NThree {10}

\begin{scope}[x={(0.8cm,0.4cm)},y={(0,1)}] 
\pgfmathsetmacro\rows{2*\NOne} 
\foreach \n in {1,...,\rows}
{
\pgfmathsetmacro\y{\n-\NOne}
\draw[red, thin] (-\NThree-.5,\y) -- (\NThree+.5,\y);
}

\foreach \n in {-\NThree,...,\NThree}
{
\draw[blue, thin] (\n,-\NOne+.5) -- (\n,\NOne+.5);
}

\draw[mybrace=0.5] (\NOne+.3,-\NOne) -- (-\NOne-.3,-\NOne);
\node[rotate=25] at (0,-\NOne-2) {$ x_0 + \pseudogrid{g_1}{g_2}{N_1}{N_1}$};w

\draw [fill=white, dotted] (-\NZero-.3,-\NZero+.7) -- (\NZero+.3,-\NZero+.7)  -- (\NZero+.3,\NZero+.3) -- (-\NZero-.3,\NZero+.3);
\node[scale=0.9,rotate=25] at (0,-\NZero+1.5) {\footnotesize{$x_0 + \pseudogrid{g_1}{g_2}{N_0}{N_0}$}};
\node[draw, circle,scale=.3, fill] (xl) at (0,0) {};
\node at (0,1) {\footnotesize{$x_0$}};

\node at (\NOne-1,\NOne-1) (gn1start) {};
\node at (\NOne-2,\NOne-1) (gn1start2) {};
\end{scope}

\begin{scope}[shift={(20,0)},x={(0.8cm,0.4cm)},y={(0,1)}]

\pgfmathsetmacro\rows{2*\NOne} 
\foreach \n in {1,...,\rows}
{
\pgfmathsetmacro\y{\n-\NOne}
\draw[red, thin] (-\NThree-.5,\y) -- (\NThree+.5,\y);
}

\foreach \n in {-\NThree,...,\NThree}
{
\draw[blue, thin] (\n,-\NOne+.5) -- (\n,\NOne+.5);
}

\draw[mybrace=0.5] (\NOne+.3,-\NOne) -- (-\NOne-.3,-\NOne);
\node[rotate=25] at (0,-\NOne-2) {$x_1 + \pseudogrid{g_1}{g_2}{N_1}{N_1}$};

\draw [fill=white, dotted] (-\NZero-.3,-\NZero+.7) -- (\NZero+.3,-\NZero+.7)  -- (\NZero+.3,\NZero+.3) -- (-\NZero-.3,\NZero+.3);
\node[scale=0.9,rotate=25] at (0,-\NZero+1.5) {\footnotesize{$x_1 + \pseudogrid{g_1}{g_2}{N_0}{N_0}$}};
\node[draw, circle,scale=.3, fill] (xl) at (0,0) {};
\node at (0,1) {\footnotesize{$x_1$}};

\node at (\NOne-1,\NOne-1) (gn1end) {};
\node at (\NOne-2,\NOne-1) (gn1end2) {};

\node at (\NOne-1,\NOne-5) (gn2start) {};
\node at (\NOne-2,\NOne-5) (gn2start2) {};
\end{scope}

\begin{scope}[shift={(40,0)},x={(0.8cm,0.4cm)},y={(0,1)}]
\pgfmathsetmacro\rows{2*\NOne} 
\foreach \n in {1,...,\rows}
{
\pgfmathsetmacro\y{\n-\NOne}
\draw[red, thin] (-\NThree-.5,\y) -- (\NThree+.5,\y);
}

\foreach \n in {-\NThree,...,\NThree}
{
\draw[blue, thin] (\n,-\NOne+.5) -- (\n,\NOne+.5);
}

\draw[mybrace=0.5] (\NOne+.3,-\NOne) -- (-\NOne-.3,-\NOne);
\node[rotate=25] at (0,-\NOne-2) {$x_2 + \pseudogrid{g_1}{g_2}{N_1}{N_1}$};

\draw [fill=white, dotted] (-\NZero-.3,-\NZero+.7) -- (\NZero+.3,-\NZero+.7)  -- (\NZero+.3,\NZero+.3) -- (-\NZero-.3,\NZero+.3);
\node[scale=0.9,rotate=25] at (0,-\NZero+1.5) {\footnotesize{$x_2  + \pseudogrid{g_1}{g_2}{N_0}{N_0}$}};
\node[draw, circle,scale=.3, fill] (xl) at (0,0) {};
\node at (0,1) {\footnotesize{$x_2$}};

\node at (\NOne-1,\NOne-5) (gn2end) {};
\node at (\NOne-2,\NOne-5) (gn2end2) {};
\end{scope}

\begin{scope}[x={(0.8cm,0.4cm)},y={(0,1)}]
\draw[green] (gn1start) to[out=80,in=100] (gn1end);
\draw[green] (gn1start2) to[out=90,in=90] (gn1end2);
\node at (\NOne+11,9) {\footnotesize{$g_{n(1)}$}};
\end{scope}

\draw[green] (gn2start) to[out=80,in=100] (gn2end);
\draw[green] (gn2start2) to[out=90,in=90] (gn2end2);
\node at (\NOne+26,17) {\footnotesize{$g_{n(2)}$}};

\node at (53,0) {$\ldots$};

\end{tikzpicture}
\caption{Using $(1,n(\ell))$-standard squares to join up different cosets. For this picture, we assume wlog that $x_{\ell+1} = x_\ell + g_{n(\ell+1)}$.}
\label{fig_comcos}
\end{figure}
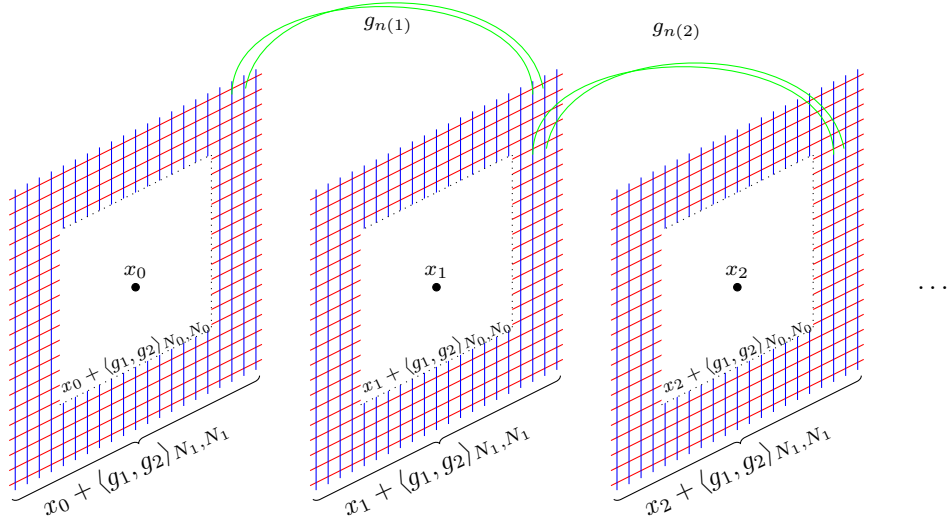

We claim that there exists a function $k \colon [t] \to [t] \cup \Set{\bot}$ such that iteratively switching $T^{k(\ell)}_\ell$ (or not doing anything at all if $k(\ell) = \bot$) results in a sequence of colourings $c'' = c_0,c_1, \ldots, c_t$ such that for each $0 \leq \ell \leq t$,
\begin{enumerate}
\item\label{it1} a single finite $1$-component in $c_\ell$ covers $\Set{x_0, \ldots, x_\ell} + (\pseudogrid{g_1}{g_2}{N_1}{N_1} \setminus \pseudogrid{g_1}{g_2}{N_0}{N_0})$,
\item\label{it2} for every $k$, every $1$-component in $c_\ell$ meeting $x_k + (\pseudogrid{g_1}{g_2}{N_1}{N_1} \setminus \pseudogrid{g_1}{g_2}{N_0}{N_0})$ is a finite cycle covering $x_k + (\pseudogrid{g_1}{g_2}{N_1}{N_1} \setminus \pseudogrid{g_1}{g_2}{N_0}{N_0})$, and
\item\label{it3} every other $1$-component, and all other components of all other colour classes in $c_\ell$ are double-rays.
\end{enumerate}

In Step~\ref{step_cc} we constructed a colouring $c_0=c''$ for which properties (1)--(3) are satisfied. Now suppose that $\ell \geq 1$, and that the colouring $c_{\ell-1}$ obtained by switching the standard squares $\set{T^{k(\ell')}_{\ell'}}:{\ell' \in [\ell-1]}$ satisfies (1)--(3).
By construction, each such standard square $T^{k(\ell')}_{\ell'}$ is incident with the ray $R^{k(\ell')}_{\ell'}$ and potentially one further $n(\ell')$-component. But since we had reserved more that $\ell-1$ different rays $R^1_\ell, \ldots, R^t_\ell$, it follows that some ray $R^{k(\ell)}_\ell$ remains a standard $n(\ell)$-coloured component for $c_{\ell-1}$. 

Both edges $(y^{k(\ell)}_\ell,y^{k(\ell)}_\ell +g_i)$ and $(y^{k(\ell)}_\ell+g_{n(\ell)},y^{k(\ell)}_\ell+g_{n(\ell)}+g_i)$ of $T^{k(\ell)}_\ell$
are contained in 
$\Set{x_{\ell-1},x_\ell} + (\pseudogrid{g_1}{g_2}{N_1}{N_1} \setminus \pseudogrid{g_1}{g_2}{N_0}{N_0})$, and hence are, by assumption (\ref{it2}), covered by finite $1$-cycles in $c_{\ell-1}$. If both edges lie in the same finite $1$-cycle, there is nothing to do (and we redefine $k(\ell) := \bot$, and let $c_\ell= c_{\ell-1}$). However, if they lie on different finite cycles, we perform a colour switching on the standard square $T^{k(\ell)}_\ell$ and claim that the resulting $c_\ell$ is as required. By  Lemma~\ref{l:easyflip2}, the two finite $1$-components merge into a single finite cycle, and so (\ref{it1}) and (\ref{it2}) are certainly satisfied for $c_\ell$. 

To see (\ref{it3}), we need to verify that $T^{k(\ell)}_\ell$ is, when we perform the switching, safe. However, $T^{k(\ell)}_\ell$ was chosen so that the edge $(y^{k(\ell)}_\ell,y^{k(\ell)}_\ell + g_{n(\ell)})\in T^{k(\ell)}_\ell$ lies on a standard double-ray $R=R^{k(\ell)}_\ell$ of $c_{\ell-1}$. Also, by the inductive assumption (\ref{it3}), the second $n(\ell)$-coloured edge $(y^{k(\ell)}_\ell +g_i,y^{k(\ell)}_\ell+g_i+g_{n(\ell)}) \in T^{k(\ell)}_\ell$ lies on an $n(l)$-coloured double-ray $R'$ in $c_{\ell-1}$. If $R$ and $R'$ are distinct, then $T^{k(\ell)}_\ell$ is safe, and if $R=R'$ then, since $R$ is a standard $n(\ell)$-double-ray, Lemma~\ref{l:crossingwithstandard} implies that $T^{k(\ell)}_\ell$ is safe. Hence $c_\ell$ satisfies (\ref{it3}). This completes the induction step.

Thus, by (\ref{it1}) and (\ref{it3}), we obtain an edge-colouring $c_t$ for $G$ such that a single finite $1$-component covers $P + (\pseudogrid{g_1}{g_2}{N_1}{N_1}\setminus \pseudogrid{g_1}{g_2}{N_0}{N_0})$, and all other $1$-components and all other components of other colour classes in $c_t$ are double-rays. Furthermore, since every $1$-component which meets $P+\pseudogrid{g_1}{g_2}{N_0}{N_0}$ must meet $P + (\pseudogrid{g_1}{g_2}{N_1}{N_1}\setminus \pseudogrid{g_1}{g_2}{N_0}{N_0})$, it follows that the $1$-component in fact covers $P+\pseudogrid{g_1}{g_2}{N_0}{N_0}$. Moreover, since $T^{k(\ell)}_\ell \subset P + \pseudogrid{g_1}{g_2}{N_1-3}{N_1-3}$ for all $\ell \in [t]$, it follows that $c_t$ is standard on $x_0 + \p{\pseudogrid{g_1}{g_2}{N_1}{\infty} \setminus \pseudogrid{g_1}{g_2}{N_1-3}{N_1-3}}$, and that it is standard outside of $P + \pseudogrid{g_1}{g_2}{N_3}{N_1}$. Hence, the square $\stdsquare{x}{g_1}{g_2}$ with base point $x = x_0 + (N_1-2)g_1 + N_1 g_2$ is a standard $(1,2)$-square such that 
\begin{itemize}
\item the edge $(x,x+g_1)$ lies on the finite $1$-cycle of $c_t$,
\item the edge $(x+g_2,x+g_2+g_1)$ lies on standard $1$-double-ray $\doubleray{x+g_2}{g_1}$ (lying completely outside of $P + \pseudogrid{g_1}{g_2}{N_3}{N_1}$) of $c_t$, and
\item the edges $(x,x+g_2)$ and $(x+g_1,x+g_2+g_1)$ lie on distinct standard $2$-double-rays $\doubleray{x}{g_2}$ and $\doubleray{x+g_1}{g_2} \subseteq x_0 + \p{\pseudogrid{g_1}{g_2}{N_1}{\infty} \setminus \pseudogrid{g_1}{g_2}{N_1-3}{N_1-3}}$.
\end{itemize}
Therefore, we may perform a colour switching on $\stdsquare{x}{g_1}{g_2}$, which results, by Lemma~\ref{l:easyflip2}, in an almost standard colouring of $G$ such that a single $1$-component covers $P + \pseudogrid{g_1}{g_2}{N_1}{N_1}$, and hence $X$.
\end{proof}

\section{Hamiltonian decompositions of products}
\label{sec_products}

The techniques from the previous section can also be applied to give us the following general result about Hamiltonian decompositions of products of graphs. 

\main*

\begin{proof}
Suppose that $\Set{{R_i}\colon{i \in I}}$ and $\Set{{S_j}\colon{j \in J}}$ form decompositions of $G$ and $H$ into edge-disjoint Hamiltonian double-rays, where $I,J$ may be finite or countably infinite. Note that, for each $i \in I, j \in J$, $R_i \square S_j$ is a spanning subgraph of $G \square H$, and is isomorphic to the Cayley graph of $(\Z^2,+)$ with the standard generating set.

Let $\pi_G \colon G \square H \to G$ and $\pi_H \colon G \square H \to H$ the projection maps from $G \square H$ onto the respective coordinates.
As our \emph{standard colouring} for $G \square H$ we take the map
\[ c \colon E(G \square H) \to I \dot\cup J, \; e \mapsto \begin{cases}
i & \text{ if } e \in \pi_G^{-1}(E(R_i)), \\
j & \text{ if } e \in \pi_H^{-1}(E(S_j)).
\end{cases}
\]
Then each $R_i \square S_j$ is $2$-coloured (with colours $i$ and $j$), and this colouring agrees with the standard colouring of $C_{\Z^2} = G((\Z^2,+), \{(1,0),(0,1)\})$ from Section~\ref{sec_cov}.

We may suppose that $V(G) = \N = V(H)$. Fix a surjection $f \colon \N \to I \cup J$ such that every colour appears infinitely often.

By starting with $c_0 = c$ and applying Lemma~\ref{lem_mainlemma} recursively inside the spanning subgraphs $R_{f(k)} \square S_1$, if $f(k) \in I$, or inside $R_1 \square S_{f(k)}$, for $f(k) \in J$, we find a sequence of edge-colourings $c_k \colon G \square H \to I \cup J$ and natural numbers $M_k \leq N_k < M_{k+1}$ such that
\begin{itemize}
\item $c_{k+1}$ agrees with $c_k$ on the subgraph of $G \square H$ induced by $[0,M_{k+1}]^2$,
\item there is a finite path $D_k$ of colour $f(k)$ in $c_k$ covering $[0,N_k]^2$, and
\item $M_{k+1}$ is large enough such that $D_k \subset [0,M_{k+1}]^2$.
\end{itemize}
To be precise, suppose we already have a finite path $D_k$ of colour $f(k)$ in $c_k$ covering $[0,N_k]^2$, and at stage $k+1$ we have say $f(k+1) \in I$, and so we are considering $R_{f(k+1)} \square S_1 \cong C_{\Z^2}$. We choose
\begin{itemize}
\item $M_{k+1} > N_k$ large enough such that $D_k \subset [0,M_{k+1}]^2 \subset G\square H$, and
\item $N_{k+1} > M_{k+1}$ large enough such that $Q_1=[0,N_{k+1}]^2 \subset G\square H$ contains all edges where $c_k$ differs from the standard colouring $c$.
\end{itemize}
Next, consider an isomorphism $h\colon R_{f(k+1)} \square S_1 \cong C_{\Z^2}$. Pick a `square' $Q_2 \subset R_{f(k+1)} \square S_1$ with $Q_1 \subset Q_2$, i.e.\ a set $Q_2$ such that $h$ restricted to $Q_2$ is an isomorphism to the subgraph of $C_{\Z^2}$ induced by  $[-\tilde{N}_{k+1},\tilde{N}_{k+1}]^2 \subseteq \Z^2$ for some $\tilde{N}_{k+1} \in \N$, and then apply Lemma~\ref{lem_mainlemma} to $R_{f(k+1)} \square S_1$ and $Q_2$ to obtain a finite path $D_{k+1}$ of colour $f(k+1)$ in $c_{k+1}$ covering $Q_2$.

It follows that the double-rays $\Set{{T_i}\colon{i\in I}} \cup \Set{{T_j}\colon{j\in J}}$ with $T_\ell = \bigcup_{k \in f^{-1}(\ell) } D_k$ give the desired decomposition of $G \square H$. 
\end{proof}

\section{Open Problems}\label{sec_open}
As mentioned in Section \ref{sec_group}, the finitely generated abelian groups can be classified as the groups $\mathbb{Z}^n \oplus \bigoplus_{i=1}^r \mathbb{Z}_{q_i}$, where $n,r,q_1,\ldots,q_r \in \Z$. Theorem \ref{t:ZN} shows that Alspach's conjecture holds for every such group with $n \geq 2$, as long as each generator has infinite order. The question however remains as to what can be said about Cayley graphs $G(\Gamma,S)$ when $S$ contains elements of finite order.

\begin{prob}
Let $\Gamma$ be an infinite, finitely-generated, one-ended abelian group and $S$ be a generating set for $\Gamma$ which contains elements of finite order. Show that $G(\Gamma,S)$ has a Hamilton decomposition.
\end{prob}

Alspach's conjecture has also been shown to hold when $n=1$, $r=0$, and the generating set $S$ has size $2$, by Bryant, Herke, Maenhaut and Webb \cite{BHMW17}. In a paper in preparation \cite{EL}, the first two authors consider the general case when $n=1$ and the underlying Cayley graph is $4$-regular. Since the Cayley graph is $2$-ended, it can happen for parity reasons that no Hamilton decomposition exists. However,  this is the only obstruction, and in all other cases the Cayley graphs have a Hamilton decomposition. Together with the result of Bermond, Favaron and Maheo \cite{BFM89} for finite abelian groups, and the case $\Gamma \cong (\Z^2,+)$ of Theorem \ref{t:ZN}, this fully characterises the $4$-regular connected Cayley graphs of finite abelian groups which have Hamilton decompositions. A natural next step would be to consider the case of $6$-regular Cayley graphs. 

\begin{prob}
Let $\Gamma$ be a finitely generated abelian group and let $S$ be a generating set of $\Gamma$ such that $C(\Gamma,S)$ is $6$-regular. Characterise the pairs $(\Gamma,S)$ such that $G(\Gamma,S)$ has a decomposition into spanning double-rays.
\end{prob}

\bibliographystyle{plain}
\bibliography{Hamilton}

\end{document}